\newtheorem{theorem}{Theorem}[section]
\newtheorem{lemma}[theorem]{Lemma}
\newtheorem{proposition}[theorem]{Proposition}
\newtheorem{definition}[theorem]{Definition}
\newtheorem*{conjecture*}{Conjecture}
\newtheorem*{acknowledgement}{Acknowledgements}
\newcommand{\al}{\alpha}
\newcommand{\la}{\lambda}
\newcommand{\si}{\sigma}
\newcommand{\ZR}{\mathbb{R}}
\newcommand{\ZT}{\mathbb{T}}
\begin{document}

\title[Falconer's distance set problem in even dimensions]{An improved result for Falconer's distance set problem in even dimensions}

\author[X. Du]{Xiumin Du}
\author[A. Iosevich]{Alex Iosevich}
\author[Y. Ou]{Yumeng Ou} 
\author[H. Wang]{Hong Wang}
\author[R. Zhang]{Ruixiang Zhang}
\address[X. Du]{Department of Mathematics, Northwestern University, Evanston, IL 60208}
\address[A. Iosevich]{Department of Mathematics, University of Rochester, Rochester, NY 14627}
\address[Y. Ou]{Department of Mathematics, University of Pennsylvania, Philadelphia, PA  19104}
\address[H. Wang]{School of Mathematics, Institute for Advanced Study, Princeton, NJ 08540}
\address[R. Zhang]{School of Mathematics, Institute for Advanced Study, Princeton, NJ 08540}

\maketitle
\begin{abstract}
We show that if compact set $E\subset \mathbb{R}^d$ has Hausdorff dimension larger than $\frac{d}{2}+\frac{1}{4}$, where $d\geq 4$ is an even integer, then the distance set of $E$ has positive Lebesgue measure. This improves the previously best known result towards Falconer's distance set conjecture in even dimensions.
\end{abstract}

\section{Introduction}

Let $E\subset\mathbb{R}^d$ be a compact set, its \emph{distance set} $\Delta(E)$ is defined by
$$
\Delta(E):=\{|x-y|:x,y\in E\}\,.
$$
A classical question in geometric measure theory, introduced by Falconer in the early 80s (\cite{F}) is, how large does the Hausdorff dimension of a compact subset of ${\ZR}^d$, $d\ge 2$ need to be to ensure that the Lebesgue measure of the set of pairwise Euclidean distances is positive.

\begin{conjecture*}\label{conj} \textup{[Falconer]}
Let $d\geq 2$ and $E\subset\mathbb{R}^d$ be a compact set. Then
$$
{\rm dim}(E)> \frac d 2 \Rightarrow |\Delta(E)|>0.
$$
Here $|\cdot|$ denotes the Lebesgue measure and ${\rm dim}(\cdot)$ is the Hausdorff dimension.
\end{conjecture*}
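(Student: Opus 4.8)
The statement displayed above is Falconer's conjecture in full, so what follows is a strategy one would pursue rather than a complete argument; the theorem actually established in this paper is the partial bound $\tfrac d2+\tfrac14$ announced in the abstract. The plan is to pass from sets to measures. By a Frostman-type reduction it suffices to show: if $\mu$ is a Borel probability measure compactly supported in $\ZR^d$ with finite $s$-energy
\[
I_s(\mu)=\iint |x-y|^{-s}\,d\mu(x)\,d\mu(y)=c_{d,s}\int_{\ZR^d} |\wh\mu(\xi)|^2|\xi|^{s-d}\,d\xi<\infty
\]
for some $s$ with $\tfrac d2<s<d$, then the pushforward $\nu=(\mathrm{dist})_*(\mu\times\mu)$ on $[0,\infty)$ is absolutely continuous --- in fact lies in $L^2$ --- which forces $|\Delta(E)|\ge |\mathrm{supp}\,\nu|>0$.

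The second step is to express $\|\nu\|_{L^2}^2$ on the Fourier side via Mattila's identity, reducing matters to the spherical $L^2$-averaging inequality
\[
A(r):=\int_{S^{d-1}}|\wh\mu(r\omega)|^2\,d\omega \ \lesssim\ r^{-\gamma}\,I_s(\mu),\qquad r\ge 1,
\]
together with the integrability $\int_1^\infty A(r)^2\,r^{d-1}\,dr<\infty$. A direct computation shows that the latter holds as soon as the averaging exponent satisfies $\gamma(s)>\tfrac d2$ for $s$ arbitrarily close to $\tfrac d2$; since one always has the trivial bound $\gamma\le s$, the task becomes to prove $\gamma(s)=s$ --- or at least $\gamma(s)\to\tfrac d2$ --- on the full range $s\uparrow\tfrac d2$.

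The crux is this averaging estimate. By $TT^*$ and dyadic pigeonholing it is dual to an adjoint restriction (extension) bound for the sphere tested against the $s$-dimensional measure $\mu$. Mattila's elementary argument yields $\gamma=s$ only for $s\le\tfrac{d-1}2$, and extending it into the range $\tfrac{d-1}2<s<\tfrac d2$ requires the sharp machinery developed for the restriction problem: $\ell^2$-decoupling for the sphere (Bourgain--Demeter), broad--narrow induction on scales (Bourgain--Guth, Guth), and the refined Strichartz / weighted restriction estimates of Du--Zhang, ideally combined with the good-tube and good-line refinements exploited in the planar case by Guth--Iosevich--Ou--Wang, all pushed to the endpoint.

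I expect this last step to be the genuine obstacle, and it is: the sharp spherical averaging estimate up to $s=\tfrac d2$ is open in every dimension $d\ge 2$, and in high dimensions it is entangled with the restriction conjecture and with sharp $L^p$ bounds for the extension operator that remain out of reach. This is precisely why the present paper settles for the weaker threshold $\tfrac d2+\tfrac14$ in even dimensions $d\ge 4$, where the evenness of $d$ is used to squeeze an extra gain out of the relevant extension estimate; reaching the exact exponent $\tfrac d2$ will require a genuinely new idea beyond what current decoupling and refined-Strichartz technology provide.
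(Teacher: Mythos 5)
The displayed statement is Falconer's conjecture itself, which the paper records as an open problem; it proves only the partial threshold $\frac d2+\frac14$ in even dimensions $d\ge 4$. So there is no proof in the paper to compare your proposal against, and your recognition that the full conjecture is out of reach is correct. However, two substantive points in your sketch are off. First, the route you outline --- Mattila's reduction to the spherical $L^2$-average decay $\int_{S^{d-1}}|\hat\mu(r\omega)|^2\,d\omega\lesssim r^{-\gamma(s)}I_s(\mu)$, pushed to $\gamma(s)=s$ (or to the exponent needed) as $s\downarrow\frac d2$ --- is not merely ``open in every dimension'': it is known to be impossible in low dimensions. The identity $\gamma(s)=s$ holds only for $s\le\frac{d-1}2$, and sharpness examples for the Wolff/Erdo\u{g}an estimates show that in $d=2$ the best Falconer threshold this criterion can ever yield is $\frac43$, and in $d=3$ it is $\frac53$; the paper states exactly this when explaining why it abandons the pure Mattila approach, and cites \cite{D} for further constraints in higher dimensions on any method that does not distinguish spherical from parabolic decay. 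So ``prove the sharp spherical averaging estimate up to $s=\frac d2$'' is a dead end as stated, not just a hard open problem.

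Second, your description of how the partial result is obtained does not match the paper. The evenness of $d$ is not used to ``squeeze an extra gain out of the extension estimate''; it is used so that $\frac d2+1$ is an integer, allowing one to project the Frostman measure orthogonally onto a $(\frac d2+1)$-dimensional subspace $V$ (chosen by Kaufman--Mattila projection theorems so the pushforward stays $\alpha$-dimensional), where $\alpha>\frac d2>\dim V-1$ makes Orponen's radial projection theorem applicable. That theorem controls the mass of ``bad'' tubes, so one can prune the measure into a good part $\mu_{1,g}$ with small $L^1$ error for the pinned distance pushforward (Proposition \ref{mainest1}); the $L^2$ bound for $d^x_*(\mu_{1,g})$ then comes from the refined decoupling theorem and Liu's $L^2$-identity (Proposition \ref{mainest2}), in the pinned-distance framework of \cite{GIOW} rather than through an improved spherical decay exponent. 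If you want to sketch the state of the art toward the conjecture, this good/bad wave-packet decomposition combined with radial projections is the framework to describe, with the honest caveat that even it is not known to reach $\frac d2$.
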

This conjecture, still open in all dimensions, has a famous predecessor in discrete geometry known as the Erd\H{o}s distinct distance conjecture. It says that $N$ points in ${\ZR}^d$, $d \ge 2$, determine at least $C_{\epsilon}N^{\frac{2}{d}-\epsilon}$, $\epsilon>0$, distinct Euclidean distances. The two dimensional case was solved by Guth and Katz \cite{GK15} after more than half of century of partial results. The higher dimensional case is still open, with the best known exponents obtained by Solymosi and Vu \cite{SV08}. There are some intriguing connections between the Erd\H os and Falconer distance problem, the issue that we shall touch upon at the end of this paper.

The main purpose of this paper is to improve the best known dimensional threshold towards the Falconer conjecture in even dimensions.

\begin{theorem}\label{main}
Let $d\geq 4$ be an even integer and $E\subset\mathbb{R}^d$ be a compact set. Then
$$
{\rm dim}(E)> \frac d 2+\frac{1}{4} \Rightarrow |\Delta(E)|>0.
$$
\end{theorem}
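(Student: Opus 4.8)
A natural approach, and the one I would pursue, is the pinned–distance strategy of Guth–Iosevich–Ou–Wang and Liu, with a weighted restriction estimate tailored to even dimensions as the key new input. \textbf{Step 1 (reduce to an $L^{2}$ bound).} Fix $s$ with $\tfrac{d}{2}+\tfrac{1}{4}<s<\dim(E)$ and a Frostman measure $\mu$ on $E$ with $\mu(B(x,r))\le r^{s}$ for all $x,r$. For $x\in\mathbb R^{d}$ let $\nu_{x}=(|x-\cdot|)_{\ast}\mu$ be the pinned distance measure; since $\operatorname{supp}\nu_{x}\subseteq\Delta(E)$ whenever $x\in E$, it is enough to produce a positive–$\mu$–measure set of $x$ for which $\nu_{x}$ is absolutely continuous with $L^{2}$ density, because Cauchy–Schwarz then gives $|\Delta(E)|\ge|\operatorname{supp}\nu_{x}|\ge \tfrac{\|\nu_{x}\|_{1}^{2}}{\|\nu_{x}\|_{2}^{2}}>0$. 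After a dyadic pigeonholing in the structure of $\operatorname{supp}\mu$ and a rescaling I would pass to unit–separated configurations: split $\mu=\mu_{1}+\mu_{2}$ with $\operatorname{dist}(\operatorname{supp}\mu_{1},\operatorname{supp}\mu_{2})\sim1$, insert a cutoff $\psi$ localizing distances to $\sim1$, and aim for $\int\|\nu^{\psi}_{x}\|_{L^{2}(\mathbb R)}^{2}\,d\mu_{1}(x)<\infty$ with $\nu^{\psi}_{x}=(|x-\cdot|)_{\ast}(\psi(x-\cdot)\mu_{2})$; for $x$ in a suitable positive–measure set this yields $\nu^{\psi}_{x}\in L^{2}$ with $\|\nu^{\psi}_{x}\|_{1}>0$, hence $|\Delta(E)|\ge|\operatorname{supp}\nu^{\psi}_{x}|>0$.

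\textbf{Step 2 (reduce to a weighted restriction estimate).} Writing $\|\nu^{\psi}_{x}\|_{L^{2}}^{2}=\int_{\mathbb R}|\widehat{\nu^{\psi}_{x}}(r)|^{2}\,dr$ and $\widehat{\nu^{\psi}_{x}}(r)=\int e^{-2\pi i r|x-y|}\psi(x-y)\,d\mu_{2}(y)$, one expands the convolution kernel $e^{-2\pi i r|z|}\psi(z)$, which for $|r|=R\gg1$ is essentially $R^{-(d-1)/2}$ times a bump adapted to the $O(1)$–neighbourhood of the sphere $\{|\xi|=R\}$ (the non‑stationary part being negligible). Summing over dyadic $R$ turns the target integral into a sum of expressions trilinear in $\mu$ assembled from $\widehat\mu$ restricted to the spheres $\{|\xi|=R\}$, and after a wave–packet decomposition at scale $R^{-1}$ it is dominated by a weighted $L^{2}$ restriction inequality for $S^{d-1}\subset\mathbb R^{d}$ against the Frostman weight $\mu_{1}$, of the form $\int_{B_{R}}|\widehat{g\,d\sigma}(x)|^{2}\,d\mu_{1}(x)\lesssim_{\varepsilon}R^{\beta(s,d)+\varepsilon}\|g\|_{L^{2}(d\sigma)}^{2}$ for a suitable exponent $\beta$. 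Following Guth–Iosevich–Ou–Wang, I would precede this by a good/bad (density) decomposition of $\mu_{1}$ at the relevant scale: the part of $\mu_{1}$ that is too concentrated on lower–dimensional sets is handled by a separate elementary estimate and absorbed into an acceptable error, while on the remaining good part $\mu_{1}$ obeys the uniform small–ball bound that the restriction estimate requires.

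\textbf{Step 3 (the even–dimensional restriction estimate — the crux).} Everything up to this point is robust; the content is in proving the weighted restriction inequality of Step 2 with $\beta(s,d)$ small enough that optimizing the splitting scale and the good/bad threshold forces $s>\tfrac{d}{2}+\tfrac{1}{4}$, and it is here that $d$ even is essential (which is also why the theorem excludes odd $d$, and why $d=2$, already treated in the plane, is omitted). I would try to extract this from a refined–Strichartz/decoupling estimate for the relevant extension operator that exploits the even–dimensional structure of $\mathbb R^{d}$ — for instance through its relation to the half–wave propagator $e^{-2\pi i r\sqrt{-\Delta}}$, or through the complex structure on $\mathbb R^{d}=\mathbb C^{d/2}$, in the spirit of the planar argument of Guth–Iosevich–Ou–Wang — combined with a bilinear/broad–narrow or polynomial–method step to control the transverse interactions. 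The hard part, where I expect essentially all the difficulty to lie, is establishing this estimate with uniform control of the error terms generated by the good/bad decomposition; once it is in hand, feeding it into Step 2 and letting $s\uparrow\dim(E)$ completes the proof.
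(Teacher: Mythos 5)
Your overall architecture (pinned distances, splitting $\mu=\mu_1+\mu_2$ with separated supports, a good/bad wave-packet decomposition, and a refined decoupling input to get an $L^2$ bound on the pinned distance measure) matches the paper's skeleton. But there is a genuine gap, and it sits exactly where you defer the difficulty. You say the bad (over-concentrated) part of the measure is ``handled by a separate elementary estimate and absorbed into an acceptable error.'' In dimensions $d\ge 3$ this is false as stated: the mechanism used in the plane to control the bad part is Orponen's radial projection theorem, which requires the measure to have dimension exceeding $d-1$, and an $\alpha$-dimensional Frostman measure with $\alpha$ near $\tfrac d2$ badly fails this hypothesis once $d\ge 3$. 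The paper's actual new idea --- and the only place where evenness of $d$ is used --- is to first push $\mu$ forward under an orthogonal projection onto a $(\tfrac d2+1)$-dimensional subspace $V$ chosen via the Kaufman--Mattila projection theorem so that the projected measure is still $\alpha$-dimensional; since $\alpha>\tfrac d2=(\tfrac d2+1)-1$, Orponen's theorem applies to the projected measures, and because tubes project to tubes and badness is preserved under projection (a tube is declared bad when $\mu_2(4T)\ge R_j^{-d/4+c(\alpha)\delta}$), the radial projection bound in $V$ controls the original bad set. Without this projection step your error term cannot be closed.

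Relatedly, you misidentify where the parity of $d$ enters. It is not through any even-dimensional restriction or decoupling phenomenon (complex structure on $\mathbb{R}^d=\mathbb{C}^{d/2}$, half-wave propagator, etc.); the decoupling input is the same refined $l^2$-decoupling corollary of Bourgain--Demeter used in the planar paper and it is valid in every dimension. Evenness is needed solely so that $\tfrac d2+1$ is an integer and the subspace $V$ exists; in odd dimensions one is forced to project onto a $\tfrac{d+1}{2}$-plane, the bad-tube threshold degrades to $R_j^{-(d-1)/4}$ (and a set filling a $\tfrac{d+1}{2}$-plane shows this is sharp), and the resulting exponent is worse than $\tfrac d2+\tfrac14$. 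So your Step 3, which you flag as the crux, is pointed at the wrong target: the restriction/decoupling half of the argument is essentially off-the-shelf, while the genuinely new content is the projection-plus-radial-projection control of the bad part that your Step 2 dismisses as elementary.
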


Falconer's conjecture has attracted a great amount of attention in the past decades, and different methods have been invented to lower the dimensional threshold that is sufficient for the distance set to have positive Lebesgue measure. To name a few important landmarks in the study of the problem: in 1985, Falconer \cite{F} showed that $|\Delta(E)|>0$ if ${\rm dim}(E)>\frac{d}{2}+\frac{1}{2}$. Bourgain \cite{B} was the first to lower the threshold $\frac{d}{2}+\frac{1}{2}$ in dimensions $d=2, d=3$ and to use the theory of Fourier restriction in the Falconer problem. The thresholds were further improved by Wolff \cite{W99} to $\frac{4}{3}$ in the case $d=2$, and by Erdo\u{g}an \cite{Erdg05} to $\frac{d}{2}+\frac{1}{3}$ when $d\geq 3$. These records were only very recently rewritten in 2018:
\[
\begin{cases}
\frac{5}{4}, &d=2, \quad \text{(Guth--Iosevich--Ou--Wang \cite{GIOW})}\\
\frac{9}{5}, &d=3, \quad \text{(Du--Guth--Ou--Wang--Wilson--Zhang \cite{DGOWWZ})}\\
\frac{d^2}{2d-1}, &d\geq 4, \quad \text{(Du--Zhang \cite{DZ})}.
\end{cases}
\]
Our main result in this paper further improves the thresholds in even dimensions $d\geq 4$.

The proof of Theorem \ref{main} is inspired by many key ingredients in \cite{GIOW}, and the numerology $\frac{d}{2}+\frac{1}{4}$ matches the two dimensional case. Similarly to \cite{GIOW}, we in fact prove a slightly stronger version of the main theorem regarding the pinned distance set.

\begin{theorem}\label{thm: pinned}
Let $d\geq 4$ be an even integer and $E\subset\mathbb{R}^d$ be a compact set. Suppose that $\text{dim}(E)>\frac{d}{2}+\frac{1}{4}$, then there is a point $x\in E$ such that its pinned distance set $\Delta_x(E)$ has positive Lebesgue measure, where
$$
\Delta_x(E):=\{|x-y|:\, y\in E\}.
$$
\end{theorem}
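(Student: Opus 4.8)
The plan is to follow the pinned–distance strategy of \cite{GIOW}, now carried out in $\ZR^d$ with $d$ even. Begin with Frostman's lemma: fix $s$ with $\tfrac d2+\tfrac14<s<\dim(E)$ and a Borel probability measure $\mu$ on $E\subset B(0,1)$ with $\mu(B(x,r))\le r^{s}$ for all $x,r$. For $x\in\ZR^d$ let $\nu_x:=(d_x)_\ast\mu$ be the push-forward of $\mu$ under $y\mapsto|x-y|$; it is a probability measure supported on the compact set $\Delta_x(E)$. If $\nu_x$ is absolutely continuous with $L^2(\ZR)$ density for all $x$ in some set of positive $\mu$-measure, then $|\Delta_x(E)|>0$ for each such $x$ (a density of total mass $1$ cannot live on a Lebesgue-null set), which proves Theorem~\ref{thm: pinned}. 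So it suffices to exhibit $X\subset E$ with $\mu(X)>0$ and
\[
\sup_{0<\delta<1}\ \int_X\|\nu_x\ast\psi_\delta\|_{L^2(\ZR)}^2\,d\mu(x)\ <\ \infty ,
\]
$\psi_\delta$ being a standard mollifier at scale $\delta$; indeed, this bound, Fatou's lemma, and lower semicontinuity of the $L^2$ norm under weak-$\ast$ limits then give $\nu_x\in L^2$ for $\mu$-a.e.\ $x\in X$. Unwinding the mollification and using Fubini in the distance variable, the displayed integral is comparable to $\delta^{-1}\int_X(\mu\times\mu)\big(\{(y,z):\,\big||x-y|-|x-z|\big|\le\delta\}\big)\,d\mu(x)$, so the target is the uniform bound
\[
\int_X(\mu\times\mu)\big(\{(y,z):\,\big||x-y|-|x-z|\big|\le\delta\}\big)\,d\mu(x)\ \lesssim\ \delta .
\]

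Next I would dyadically localize to $|x-y|\sim|x-z|\sim\rho$ and rescale so that $\rho\sim1$, $\delta=R^{-1}$ (very small $\rho$, including $\rho\lesssim\delta$, contributes negligibly by the Frostman bound). By Plancherel on $\ZR$,
\[
\|\nu_x\ast\psi_\delta\|_{L^2}^2\ \approx\ \int_{|\tau|\le R}\Big|\int e^{-2\pi i\tau|x-y|}\,d\mu(y)\Big|^2\,d\tau ,
\]
and stationary phase (in $y$, with $|x-y|\sim1$) lets one write the inner integral as, essentially, the extension operator $E_{\tau S^{d-1}}$ of the sphere of radius $\tau$ applied to $\widehat\mu(\tau\,\cdot)|_{S^{d-1}}$ and evaluated at $x$, up to harmless error terms. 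The problem thus becomes a weighted $L^2$ extension inequality for $S^{d-1}$ with weight $d\mu$, and the relevant norm of the input, $\|\widehat\mu(\tau\,\cdot)\|_{L^2(S^{d-1})}^2=\int_{S^{d-1}}|\widehat\mu(\tau\omega)|^2\,d\omega$, is the classical spherical average, governed by known decay rates for $s$-dimensional Frostman measures (or absorbed directly).

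The crux is then to prove, for a large piece $X$ of $\mu$,
\[
\int_X\|\nu_x\ast\psi_\delta\|_{L^2}^2\,d\mu(x)\ \lesssim_{\e}\ \delta^{\,2s-d-\frac12-\e},
\]
which is $O(1)$ precisely because $s>\tfrac d2+\tfrac14$ (for $d=2$ this is the GIOW threshold $5/4$). I would run the good/bad-tubes dichotomy of \cite{GIOW}: split $E$ according to whether, at some intermediate scale between $\delta$ and $1$, $\mu$ concentrates into a thin neighborhood of a lower-dimensional affine set adapted to the incidence geometry. On the \emph{bad} part such concentration lets one pass to a genuinely lower-dimensional configuration, handled recursively or by an auxiliary estimate together with radial-projection bounds for Frostman measures. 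On the \emph{good} part one uses the non-concentration to apply a \emph{refined decoupling} for $S^{d-1}$ (in the spirit of the refined Strichartz estimates behind \cite{GIOW} and \cite{DGOWWZ}): decouple $E_{\tau S^{d-1}}$ into $R^{1/2}$-caps, use local constancy on the dual $R^{1/2}$-tubes, and count tube–cap incidences via the Frostman condition. It is at this counting/rescaling step that $d$ being even is used — the relevant exponent balances cleanly only for even $d$ — and this is what lets the output reach $\tfrac d2+\tfrac14$ rather than the $\tfrac{d^2}{2d-1}$ of \cite{DZ}; combining with the spherical-average bound for $\|\widehat\mu(\tau\,\cdot)\|_{L^2(S^{d-1})}$ closes the estimate on $X$.

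Finally, since $X$ can be arranged to carry a positive proportion of $\mu$ (the bad part being absorbed into the recursion/auxiliary bound) and the estimate is uniform in $\delta$, the first-paragraph reduction yields $\nu_x\in L^2$ on a set of positive $\mu$-measure, whence $|\Delta_x(E)|>0$ there and Theorem~\ref{thm: pinned} follows (as does Theorem~\ref{main}). I expect the main obstacle to be exactly the good-part estimate: formulating a non-concentration hypothesis that is at once strong enough to feed the refined-decoupling/counting argument with the correct even-dimensional numerology and weak enough that its failure is genuinely lower dimensional and can be recursed away, while keeping the dyadic bookkeeping in $\rho=|x-y|$ uniform. A secondary technical point is making rigorous the stationary-phase identification of $\widehat{\nu_x}$ with a spherical extension operator and controlling its non-stationary contribution.
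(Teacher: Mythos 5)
There are two genuine gaps in your plan, one structural and one concerning the actual novelty needed in dimensions $d\ge 4$.

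First, your opening reduction — exhibit $X$ with $\mu(X)>0$ and $\sup_\delta\int_X\|\nu_x\ast\psi_\delta\|_{L^2}^2\,d\mu(x)<\infty$, hence $\nu_x\in L^2$ — asks for more than the method can deliver, and the rest of your outline does not actually produce it. The good/bad dichotomy here is not a decomposition of the set $E$ (or of $X$) but a wave-packet pruning of the \emph{measure}: one removes from $\mu_1$ those frequency-localized pieces $M_T\mu_1$ supported on tubes $T$ carrying too much $\mu_2$-mass, obtaining a complex-valued measure $\mu_{1,g}$ which is not the restriction of $\mu_1$ to any subset. Only $d^x_*(\mu_{1,g})$ is shown to lie in $L^2$ (via refined decoupling); the bad part is controlled \emph{only in $L^1$}, i.e.\ $\|d^x_*(\mu_1)-d^x_*(\mu_{1,g})\|_{L^1}<1/1000$ for most $x$, and the conclusion $|\Delta_x(E)|>0$ comes from Cauchy--Schwarz applied to $\int_{\Delta_x(E)}|d^x_*\mu_{1,g}|$. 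Your claim that the bad part can be ``absorbed into the recursion'' so that the full $\nu_x$ is in $L^2$ is exactly what this circle of ideas is designed to avoid: an $L^2$ bound on the full pushforward is the classical Mattila-type approach, whose known obstructions are the reason the threshold $4/3$ (for $d=2$) was stuck before \cite{GIOW}. You need to restructure the endgame as an $L^1$/$L^2$ hybrid, not an $L^2$ statement about $\nu_x$.

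Second, you have not addressed the obstacle that is specific to $d\ge 4$ and is the main content of the paper. Controlling the $L^1$ error requires Orponen's radial projection theorem, which applies to measures of dimension $\beta>n-1$ in $\mathbb{R}^n$; for $\alpha$ near $\tfrac d2+\tfrac14$ with $d\ge 4$ this fails badly in the ambient space. The fix is to first push $\mu$ forward under an orthogonal projection $\pi_V$ onto a well-chosen $(\tfrac d2+1)$-dimensional subspace $V$ (Kaufman--Mattila guarantees a $V$ preserving the energy $I_\beta$ for all $\beta<\alpha$), define badness of a tube via the mass of the \emph{projected} tube, and apply Orponen in $V$, where the condition $\alpha>(\tfrac d2+1)-1$ does hold. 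This is also where evenness of $d$ enters — so that $\tfrac d2+1$ is an integer and the bad-tube mass threshold $R_j^{-d/4+c(\alpha)\delta}$ is compatible with the cap-counting on $S^{d/2}$ — not, as you suggest, in the decoupling/incidence-counting step, which works in every dimension. Without this projection ingredient your ``auxiliary estimate together with radial-projection bounds'' has no content for $d\ge 4$. (A minor further point: the identification of $\|d^x_*(\cdot)\|_{L^2}^2$ with $\int_0^\infty|(\cdot)\ast\hat\sigma_r(x)|^2r^{d-1}\,dr$ is cleanest via Liu's $L^2$ identity rather than a stationary-phase argument, but that is a technical choice, not a gap.)
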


Let $E\subset \mathbb{R}^d$ be a compact set with positive $\alpha$-dimensional Hausdorff measure. It is a standard result that there exists a probability measure $\mu$ supported on $E$ such that
\[
\mu(B(x,r))\lesssim r^\alpha,\qquad \forall x\in \mathbb{R}^d,\, \forall r>0.
\]
Such measure is usually referred to as a Frostman measure. In the study of the Falconer problem, a classical analytic approach due to Mattila \cite{M} is to reduce the desired result ($|\Delta(E)|>0$) to showing certain estimates of the decay rate of the Fourier transform of $\mu$. This is also precisely the route taken in many prior works including \cite{W99, Erdg05, DGOWWZ, DZ}.

However, this approach also has its limit. For instance, it is known that when $d=2$, the best possible Falconer threshold it could imply is $\frac{4}{3}$, which matches the result of Wolff \cite{W99}. And when $d=3$, $\frac{5}{3}$ would be the best possible (see \cite{Erdg05}). In higher dimensions, there is no currently known example showing such constraint. Whereas, in \cite{D} it shows that further constraints arise if the method employed does not distinguish the spherical and parabolic decay rates.

In \cite{GIOW}, the authors studied the two dimensional Falconer problem, and developed a new method that modifies the original Mattila approach. Their argument consists primarily of two steps. First, one prunes the natural Frostman measure $\mu$ on $E$ by removing \emph{bad} wave packets at different scales (see Section \ref{sec: good} below for the exact process), and shows that the error introduced in the pruning process can be controlled. Second, one applies a refined decoupling inequality to estimate some $L^2$ quantity involving the pruned good measure. One of the main reasons why the argument doesn't readily extend to higher dimensions is because of the first step. More precisely, in \cite{GIOW}, to make sure that the pruned measure is close enough to the original Frostman measure, one applies a radial projection theorem of Orponen \cite{O17b} (see Theorem \ref{thm: Orponen} below) that assumes the measure has dimension $\alpha>d-1$. However, when $d\geq 3$, this condition fails to hold if $\alpha$ is close enough to $\frac{d}{2}$. We overcome this difficulty by introducing another ingredient into the process: orthogonal projections of the original measure, which is the main contribution of the present article.

\vspace{.1in}

\noindent \textbf{Notation.} Throughout the article, we write $A\lesssim B$ if $A\leq CB$ for some absolute constant $C$; $A\sim B$ if $A\lesssim B$ and $B\lesssim A$; $A\lesssim_\epsilon B$ if $A\leq C_\epsilon B$ for all $\epsilon>0$; $A\lessapprox B$ if $A\leq C_\epsilon R^\epsilon B$ for any $\epsilon>0$, $R>1$.

For a large parameter $R$, ${\rm RapDec}(R)$ denotes those quantities that are bounded by a huge (absolute) negative power of $R$, i.e. ${\rm RapDec}(R) \leq C_N R^{-N}$ for arbitrarily large $N>0$. Such quantities are negligible in our argument. We say a function is essentially supported in a region if (the appropriate norm of) the tail outside the region is ${\rm RapDec}(R)$ for the underlying parameter $R$.

\begin{acknowledgement}
XD is supported by NSF DMS-1856475. AI was partially supported by NSF HDR TRIPODS 1934985. YO is supported by NSF DMS-1854148. HW is funded by the S.S. Chern Foundation and NSF DMS-1638352. RZ is supported by NSF DMS-1856541.  We would like to thank Pablo Shmerkin for pointing out a minor issue in a previous version regarding the pushforward measure under the orthogonal projection.
\end{acknowledgement}


\section{Setup and main estimates}
\setcounter{equation}0
In this section, we set up the problem and outline two main estimates, from which Theorem \ref{thm: pinned} follows.

Let $E\subset \mathbb{R}^d$ be a compact set with positive $\alpha$-dimensional Hausdorff measure, $\frac{d}{2}<\alpha<\frac{d}{2}+1$. Without loss of generality, assume that $E$ is contained in the unit ball. Then there exists a probability measure $\mu$ supported on $E$ such that
\begin{equation}\label{eqn: dim}
\mu(B(x,r))\lesssim r^\alpha,\qquad \forall x\in \mathbb{R}^d,\, \forall r>0.
\end{equation}

According to results on Hausdorff dimension of projections proved in \cite{KM} (also presented in \cite[Section 5.3]{M2}), there exists a ($\frac{d}{2}+1$)-dimensional subspace $V\subset\mathbb{R}^d$ such that $(\pi_V)_\ast(\mu)$, the pushforward measure of $\mu$ under the orthogonal projection from $\mathbb{R}^d$ onto $V$, is still $\alpha$-dimensional, in the sense that
\begin{equation}\label{eqn: proj energy}
I_{\beta} ((\pi_V)_*(\mu)) <\infty, \, \, \, \forall \, 0<  \beta<\alpha,
\end{equation}
where $I_{\beta} (\nu): = \int \int |x-y|^{-\beta} d\nu(x) d\nu(y)$ denotes the energy of $\nu$.
Since $V$ will be fixed throughout the proof, in the following we will drop it from the notation and write $\pi=\pi_V$ for short.

Similarly as in \cite{GIOW}, it will be helpful to consider two disjoint subsets $E_1, E_2$ of $E$ and focus on showing that the distance set between $E_1, E_2$ already has positive Lebesgue measure.

The two subsets will be chosen as follows. First, it is elementary that one can find two subsets $\tilde{E}_1, \tilde{E}_2\subset \pi(E)$ with positive projected measure satisfying $d(\tilde{E}_1,\tilde{E}_2)\gtrsim 1$. For $i=1,2$, let
\[
\tilde{\mu}_i:=\pi_\ast(\mu)(\tilde{E}_i)^{-1} \pi_\ast(\mu)\chi_{\tilde{E}_i}.
\]
It is easy to see that $\tilde{\mu}_i$ is a probability measure supported on $\tilde{E}_i$ satisfying $I_{\beta}(\tilde{\mu}_i)<\infty$, $\forall 0<\beta<\alpha$.

Next, define $E_i:=\pi^{-1}(\tilde{E}_i)\cap E$, $i=1,2$. Then one has $E_1, E_2\subset E$ and $d(E_1, E_2)\gtrsim 1$. Moreover, letting $\mu_i:=\mu(E_i)^{-1}\mu \chi_{E_i}$, $i=1,2$, one obtains a pair of probability measures $\mu_1, \mu_2$ that are supported on $E_1, E_2$ respectively, satisfying $\mu_i(B(x,r))\lesssim r^\alpha$, $\forall x\in \mathbb{R}^d, \forall r>0$, $i=1,2$. It is straightforward to check that $\pi_*(\mu_i)=\tilde\mu_i$.
Our goal in the following is to show that when $\alpha>\frac{d}{2}+\frac{1}{4}$, there exits $x\in E_2$ such that $|\Delta_x(E_1)|>0$. Note that in the above, for the orthogonal projection $\pi$ to be well defined, we have already used the assumption that $d$ is an even integer.

To relate the measures discussed above to the distance set, it is useful to consider their pushforward measures under the distance map. More precisely, let $x\in E_2$ be any fixed point and let $d^x(y):=|x-y|$ be its induced distance map. Then, the pushforward measure $d^x_\ast(\mu_1)$, defined as
\[
\int_{\mathbb{R}} \psi(t)\,d^x_\ast(\mu_1)(t)=\int_{E_1}\psi(|x-y|)\,d\mu_1(y),
\]
is a natural measure that is supported on $\Delta_x(E_1)$.

In the following, we will construct another complex valued measure $\mu_{1,g}$ that is the \emph{good} part of $\mu_1$ with respect to $\mu_2$, and study its pushforward under the map $d^x$. The main estimates are the following.

\begin{proposition} \label{mainest1} Let $d\geq 4$ be an even integer and $\alpha > \frac{d}{2}$. If we choose $R_0$ large enough in the construction of $\mu_{1,g}$ in Section \ref{sec: good} below, then there is a subset $E_2' \subset E_2$ so that $\mu_2(E_2') \ge 1 - \frac{1}{1000}$ and for each $x \in E_2'$,
$$	
	\| d^x_*(\mu_1) - d^x_*(\mu_{1,g}) \|_{L^1} < \frac{1}{1000}.
$$
\end{proposition}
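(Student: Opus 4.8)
The plan is to write the error measure as $d^x_*(\mu_1)-d^x_*(\mu_{1,g})=d^x_*(\mu_{1,b})$, where $\mu_{1,b}:=\mu_1-\mu_{1,g}$ is the part discarded during the pruning of Section~\ref{sec: good}. Since that pruning removes bad wave packets over a rapidly increasing sequence of scales $R_0\le R_1\le R_2\le\cdots$, one obtains a decomposition $\mu_{1,b}=\sum_{k\ge 1}\mu_{1,b}^{(k)}$, where $\mu_{1,b}^{(k)}$ collects the wave packets of the partially pruned measure $\mu_{1,g}^{(k-1)}$ at scale $R_k$ whose tubes are declared \emph{bad}, i.e.\ carry too little $\mu_2$-mass. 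By the triangle inequality it then suffices to prove a per-scale estimate of the shape
$$\int_{E_2}\big\|d^x_*\big(\mu_{1,b}^{(k)}\big)\big\|_{L^1(dr)}\,d\mu_2(x)\ \lesssim_{\epsilon}\ R_k^{-\delta}$$
for some $\delta=\delta(d,\alpha)>0$; summing the resulting fast-decaying series in $k$ and taking $R_0$ large forces $\int_{E_2}\|d^x_*(\mu_{1,b})\|_{L^1}\,d\mu_2(x)<10^{-6}$, whence Chebyshev's inequality produces a set $E_2'\subset E_2$ with $\mu_2(E_2')\ge 1-\tfrac1{1000}$ on which $\|d^x_*(\mu_1)-d^x_*(\mu_{1,g})\|_{L^1}<\tfrac1{1000}$.

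For the per-scale estimate one expands $d^x_*(\mu_{1,b}^{(k)})=\sum_{T}d^x_*\big((\mu_{1,g}^{(k-1)})_T\big)$, the sum over bad tubes $T$ at scale $R_k$; each summand is an absolutely continuous measure supported in a bounded interval, so one may pass freely between its $L^1$ and $L^2$ norms by Cauchy--Schwarz and work with whichever is convenient. The geometric heart is that $(\mu_{1,g}^{(k-1)})_T$ has Fourier support in a cap of angular width $\sim R_k^{-1/2}$ centered at the direction $\omega_T$, so by repeated integration by parts $d^x_*\big((\mu_{1,g}^{(k-1)})_T\big)$ is ${\rm RapDec}(R_k)$ unless $x$ lies in the $\sim R_k^{-1/2}$-neighbourhood of the core line $\ell_T$ of $T$. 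Summing in $T$ and integrating against $\mu_2$ therefore reduces matters to controlling $\sum_{T\ \mathrm{bad}}\mu_2\big(N_{R_k^{-1/2}}(\ell_T)\big)\,\|(\mu_{1,g}^{(k-1)})_T\|_{L^1}$, an $x$-averaged radial-projection type quantity for the pair $(\mu_1,\mu_2)$. In two dimensions this is exactly where \cite{GIOW} invoke Orponen's radial projection theorem (Theorem~\ref{thm: Orponen}), valid since there $\alpha>1=d-1$. Here $\alpha$ is only assumed $>\tfrac d2$, so the new idea is to run this step after projecting orthogonally to the $\big(\tfrac d2+1\big)$-dimensional subspace $V$: the pushforwards $\tilde\mu_i=\pi_*(\mu_i)$ satisfy $I_\beta(\tilde\mu_i)<\infty$ for all $\beta<\alpha$ by \eqref{eqn: proj energy}, and $\alpha>\tfrac d2=\dim V-1$, so Orponen's theorem \emph{does} apply to $(\tilde\mu_1,\tilde\mu_2)$ inside $V$ and delivers the power gain $R_k^{-\delta}$ together with quantitative energy bounds.

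The step I expect to be the main obstacle is reconciling the genuinely $d$-dimensional distance map with this projected picture. One must arrange the pruning so that a tube in $\mathbb{R}^d$ carrying little $\mu_2$-mass is detected, after applying $\pi$, as a tube in $V$ carrying little $\tilde\mu_2$-mass (and conversely that the retained heavy tubes project well, so no mass of $\mu_1$ is lost); this is what forces the construction of $\mu_{1,g}$ in Section~\ref{sec: good} to be phrased directly in terms of $\tilde\mu_1,\tilde\mu_2$ rather than $\mu_1,\mu_2$. In addition one must treat separately the wave packets whose direction $\omega_T$ is nearly orthogonal to $V$, for which $\pi$ degenerates and the projected tube is not a genuine tube; these are few and should be absorbed into the ${\rm RapDec}(R_k)$ error or handled by a crude count, since the caps near $V^{\perp}$ occupy a small portion of $S^{d-1}$. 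Controlling the pushforward $\pi_*(\mu)$ and these degenerate directions carefully is the delicate point (compare the remark in the Acknowledgements); once it is settled, the remaining steps are the standard wave-packet and pushforward manipulations already present in \cite{GIOW}.
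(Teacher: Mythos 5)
Your outline follows the same route as the paper: decompose the error scale by scale, use the wave-packet localization lemma (\cite[Lemma 3.5]{GIOW}, quoted here as Lemma \ref{lem: bad}) to reduce to bounding the $\mu_1\times\mu_2$-measure of pairs joined by a bad tube, then pass to the $(\tfrac d2+1)$-dimensional subspace $V$ and apply Orponen's theorem to $\tilde\mu_1,\tilde\mu_2$. But there is a genuine error at the centre of the outline: you declare a tube bad when it carries \emph{too little} $\mu_2$-mass, whereas the construction (and the only convention under which the argument can close) declares $T$ bad when $\mu_2(4T)\ge R_j^{-d/4+c(\alpha)\delta}$, i.e.\ when it is \emph{too heavy}. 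Heaviness is what makes Orponen's theorem ``deliver the power gain'': for $y\in\pi(E_1)$, each bad projected tube through $y$ yields a cap $A(\tilde T)\subset S^{\frac d2}$ of radius $\sim R_j^{-1/2+\delta}$ with $P_y\tilde\mu_2(A(\tilde T))\ge R_j^{-d/4+c(\alpha)\delta}$, so by Vitali there are at most $R_j^{d/4-c(\alpha)\delta}$ essentially distinct directions of bad tubes through $y$, whence $|P_y(\widetilde{\text{Bad}}_j(y))|\lesssim R_j^{-(c(\alpha)-d/2)\delta}$, and H\"older against $\|P_y\tilde\mu_2\|_{L^p}$ gives $\mu_1\times\mu_2(\text{Bad}_j)\lesssim R_j^{-2\delta}$. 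With ``bad $=$ light'' no such directional count is possible (light tubes through a point can point everywhere), and Proposition \ref{mainest2} would then retain precisely the heavy wave packets it needs to discard. This quantitative step is also the one your proposal leaves unexecuted, so the asserted decay $R_k^{-\delta}$ is never actually derived.

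The difficulty you single out as the main obstacle is resolved far more cheaply than you expect. Badness is defined in $\mathbb{R}^d$ via $\mu_2(4T)$ and is automatically inherited by the projected tube, since $\tilde\mu_2(4\pi(T))\ge\mu_2(4T)$; no reformulation of the pruning in terms of $\tilde\mu_1,\tilde\mu_2$ is needed. Likewise the degenerate directions (tubes nearly parallel to $V^{\perp}$) are not absorbed by a crude count --- which would not obviously suffice, as the product measure could concentrate on them --- but are excluded outright: only tubes meeting both $E_1$ and $E_2$ contribute to $\text{Bad}_j$, and since $E_1,E_2$ were chosen with $d(\pi(E_1),\pi(E_2))\gtrsim 1$, every such tube projects to a genuine tube of length $\sim 1$ in $V$. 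Two smaller points: the paper's pruning is single-pass (one removes wave packets of $\mu_1$ itself, not of an iteratively pruned measure), and the paper extracts the exceptional sets $B_j$ scale by scale rather than by one global Chebyshev, though your variant would also suffice.
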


\begin{proposition} \label{mainest2} Let $d\geq 4$ be an even integer and $\alpha > \frac{d}{2}+\frac{1}{4}$, then for sufficiently small $\delta$ in terms of $\alpha$ in the construction of $\mu_{1,g}$ in Section \ref{sec: good} below,
$$	
\int_{E_2}  \| d^x_*(\mu_{1,g}) \|_{L^2}^2 d \mu_2(x) < + \infty.
$$
\end{proposition}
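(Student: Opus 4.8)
The plan is to carry out the second step of the \cite{GIOW} strategy: reduce the left-hand side to a dyadic sum over frequency scales $R$ of $L^2$ quantities built from spherical extension operators, and on each scale apply a refined $\ell^2$ decoupling inequality, the decoupling gain being supplied by the fact that $d^x_*(\mu_{1,g})$ concentrates on few $R^{1/2}$-tubes --- this last point being where the Frostman bounds for $\mu_2$ and for the projected measures $\tilde\mu_1,\tilde\mu_2$ enter.

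First I would Plancherelize in the distance variable: $\|d^x_*(\mu_{1,g})\|_{L^2}^2=\int_{\ZR}|\widehat{d^x_*(\mu_{1,g})}(r)|^2\,dr$ with $\widehat{d^x_*(\mu_{1,g})}(r)=\int e^{-2\pi i r|x-y|}\,d\mu_{1,g}(y)$. The range $|r|\lesssim 1$ contributes $\lesssim\|\mu_{1,g}\|_{\rm TV}^2\lesssim 1$, so it suffices, for each dyadic $R\gtrsim 1$, to bound $\int_{E_2}\int_{|r|\sim R}|\widehat{d^x_*(\mu_{1,g})}(r)|^2\,dr\,d\mu_2(x)$ by a quantity summable in $R$; the target is a bound $R^{-c(\alpha)}$ with $c(\alpha)>0$ precisely when $\alpha>\tfrac d2+\tfrac14$. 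Since $d(E_1,E_2)\gtrsim 1$ and $E$ lies in the unit ball we have $|x-y|\sim 1$ and $|r(x-y)|\sim R$, so I may insert the stationary-phase expansion of $\widehat{d\sigma_{S^{d-1}}}$ and replace $e^{-2\pi i r|x-y|}$ by $c\,|r(x-y)|^{(d-1)/2}\,\widehat{d\sigma_{S^{d-1}}}(r(x-y))$, up to the conjugate phase (whose cross terms are killed by integrating by parts in $r$ after summation over $R$) and up to ${\rm RapDec}(R)$. As in Mattila's approach \cite{M} and in \cite{GIOW}, this turns the $R$-piece, after passing to polar coordinates on the annulus $|\xi|\sim R$, into a weighted $L^2$ norm of the extension operator of $\mu_{1,g}$ restricted to $rS^{d-1}$ --- exactly the object whose bad wave packets were removed in the construction of $\mu_{1,g}$ in Section \ref{sec: good}.

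Next, on a fixed scale $R$, decompose $\mu_{1,g}$ into wave packets adapted to $R^{1/2}$-tubes; by construction only \emph{good} tubes survive. I would then apply the refined $\ell^2$ decoupling inequality for $S^{d-1}$, in the form used in \cite{GIOW,DZ}, whose improvement over flat decoupling is a negative power of the number of $R^{1/2}$-tubes through a typical $R^{1/2}$-ball on which the relevant function is large. Two ingredients bound this tube count: the Frostman estimate $\mu_2(B(x,\rho))\lesssim\rho^\alpha$, and --- this being the substitute for the use of Orponen's radial projection theorem in \cite{GIOW} --- the fact that $(\pi_V)_*(\mu_1)$ and $(\pi_V)_*(\mu_2)$ are $\alpha$-dimensional on the $(\tfrac d2+1)$-dimensional subspace $V$. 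A good $d$-dimensional tube through $x$ projects to a $(\tfrac d2+1)$-dimensional tube through $\pi(x)$ carrying comparable $\tilde\mu_1$-mass, and since $\alpha>\tfrac d2=\dim V-1$ one may apply the radial projection theorem (Theorem \ref{thm: Orponen}) \emph{inside} $V$ to count these, then pull the estimate back to $\ZR^d$. It then remains to (i) insert this tube count into the refined decoupling together with the $L^2$ orthogonality of the wave packets, (ii) track the resulting exponent in $R$, and (iii) sum the geometric series in $R$.

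The main obstacle is the last step: making the refined decoupling interact cleanly with the two Frostman conditions and the projected-measure dimension bound --- in particular, that separated tube directions through $x$ stay separated after projection to $V$, so that Orponen's theorem genuinely controls the count --- and then checking that the exponents close exactly at $\alpha=\tfrac d2+\tfrac14$ rather than at the weaker threshold a purely $V$-based argument would produce; this is where the interplay between the $d$-dimensional decoupling and the $(\tfrac d2+1)$-dimensional projection is decisive. Quantitatively, one must choose the pruning parameter $\delta$ small in terms of $\alpha$ so that the decoupling gain $R^{-c(\alpha)}$ strictly beats every $R^{\e}$- and $R^{O(\delta)}$-type loss accumulated from the wave-packet decomposition, the induction on scales, and the pruning itself, uniformly in $R$.
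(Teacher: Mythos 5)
Your overall architecture for the good-part estimate --- reduce to the spherical averages $\mu_{1,g}*\hat\sigma_r$ at dyadic scales, decompose into the surviving good wave packets, apply refined decoupling with a gain coming from the small number of tubes of $\mathbb{W}$ meeting a typical $r^{-1/2}$-cube of $Y$, and close by summing in $r$ --- is the same as the paper's. Two smaller discrepancies in the reduction: the paper bypasses the stationary-phase expansion and the cross-term bookkeeping entirely by combining $d^x_*(\mu_{1,g})(t)=t^{d-1}\mu_{1,g}*\sigma_t(x)$ with Liu's exact identity $\int_0^\infty|f*\sigma_t(x)|^2t^{d-1}dt=\int_0^\infty|f*\hat\sigma_r(x)|^2r^{d-1}dr$; and the final summation over $r$ is not a geometric series but reassembles, via orthogonality, into $\int|\xi|^{-(d-\beta)}|\hat\mu_1(\xi)|^2d\xi\sim I_\beta(\mu_1)$ with $\beta=d-\frac{d}{2(d+1)}-\frac{(d-1)\alpha}{d+1}+\epsilon$, whose finiteness ($\beta<\alpha$) is exactly the condition $\alpha>\frac d2+\frac14$.

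The genuine gap is in your proposed source of the tube count $M$. Orponen's radial projection theorem and the orthogonal projection to $V$ play no role in Proposition \ref{mainest2}; they belong entirely to Proposition \ref{mainest1}, where they certify that pruning at the threshold $R_j^{-d/4+c(\alpha)\delta}$ costs only a small $L^1$ error. In the present proposition the only input about good tubes is their \emph{defining} property $\mu_2(4T)<r^{-d/4+c(\alpha)\delta}$, and the needed inequality is obtained by double counting the sum $\sum_{q\in\mathcal{Q}_M}\sum_{T\in\mathbb{W}_\lambda:\,T\cap q\neq\emptyset}\mu_2(q)$: each $q\in\mathcal{Q}_M$ meets $\sim M$ tubes, while each good tube contributes at most $\mu_2(4T)\lesssim r^{-d/4+c(\alpha)\delta}$, yielding $M\,\mu_2(\mathcal{N}_{r^{-1/2}}(Y_M))\lesssim|\mathbb{W}_\lambda|\,r^{-d/4+c(\alpha)\delta}$. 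Note that what refined decoupling actually needs here (after H\"older against $\mu_2*\eta_{1/r}$ using the Frostman bound $\|\mu_2*\eta_{1/r}\|_\infty\lesssim r^{d-\alpha}$) is a bound on $M/|\mathbb{W}_\lambda|$ in terms of the $\mu_2$-mass of $Y_M$, not a cardinality count of tubes; Theorem \ref{thm: Orponen} is an integrated $L^p$ statement about $P_y\tilde\mu_2$ and controls the total $\tilde\mu_1\times\tilde\mu_2$-mass of pairs lying in \emph{bad} tubes, so ``applying it inside $V$ and pulling back'' does not produce a per-cube count of \emph{good} tubes and cannot replace the double-counting step. Finally, to land on the stated right-hand side you still need the pointwise bound $\|f_T\|_{L^p}\lesssim r^{-(\frac1{2p}+\frac14)(d-1)+O_\alpha(\delta)}\|\widehat{M_T\mu_1}\|_{L^2(d\sigma_r)}$ followed by orthogonality in $T$, which your outline omits.
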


In the above propositions, we have slightly abused notation by using $d^x_*(\mu)$ and $d^x_*(\mu_{1,g})$ to denote both the pushforward measures and their densities. To be completely
rigorous, one would need to define the density as limit of approximate identity,
then derive the propositions above uniformly with respect to the limiting
process. We omit the details as the process is fairly standard (for example see
\cite{Liu}).

It is a routine exercise to check that Theorem \ref{thm: pinned} is immediately implied by the two propositions above.

\begin{proof} [Proof of Theorem \ref{thm: pinned} using Proposition \ref{mainest1} and Proposition \ref{mainest2}] The two propositions tell us that there is a point $x \in E_2$ so that
\begin{equation} \label{L1close}
\| d^x_*(\mu_1) - d^x_*(\mu_{1,g}) \|_{L^1} < 1/1000, \end{equation}
and
\begin{equation} \label{L2bound}
\| d^x_*(\mu_{1,g}) \|_{L^2} < + \infty.
\end{equation}
Since $d^x_*(\mu_1)$ is a probability measure, (\ref{L1close}) guarantees that $\|d^x_*(\mu_{1,g})\|_{L^1} \ge 1 - 1/1000. $
Note that the support of $d^x_*(\mu_1)$ is contained in $\Delta_x(E)$.  Therefore
$$
\int_{\Delta_x(E)} | d^x_* \mu_{1,g} | = \int |d^x_* (\mu_{1,g})| - \int_{\Delta_x(E)^c} |d^x_*(\mu_{1,g})|
$$
$$
\ge 1 - \frac{1}{1000} - \int |d^x_*(\mu_1) - d^x_*(\mu_{1,g})| \ge 1 - \frac{2}{1000}.
$$
But on the other hand,
\begin{equation} \label{yes}
\int_{\Delta_x(E)} | d^x_* \mu_{1,g}| \le | \Delta_x(E)|^{1/2} \left( \int |d^x_* \mu_{1,g}|^2 \right)^{1/2}.
\end{equation}
Since (\ref{L2bound}) tells us that $\int |d^x_* \mu_{1,g}|^2$ is finite, it follows that $|\Delta_x(E)|$ is positive.  \end{proof}

Note that the two propositions in the above are parallel to \cite[Proposition 2.1, 2.2]{GIOW}. The main novelty of our proof is the construction of the good measure $\mu_{1,g}$ and the justification of Proposition \ref{mainest1}. Once that step is completed, the proof of Proposition \ref{mainest2} proceeds very similarly to its corresponding version in \cite{GIOW}.

\section{Construction of good measure and Proposition \ref{mainest1}}
\setcounter{equation}0

\subsection{Construction of good measure}\label{sec: good}

Our plan is to define $\mu_{1,g}$ by eliminating certain bad wave packets from $\mu_1$. We will show that this can be done at a single scale at each time and the error between $\mu_{1,g}$ and $\mu_1$ has sufficient decay. This procedure proceeds very similarly to \cite{GIOW}. Heuristically, we would like to define a wave packet to be bad if its projection onto the $(\frac{d}{2}+1)$-dimensional subspace $V$ has $\tilde\mu_2$-mass that is significantly higher than average.

Here are the details. Let $R_0$ be a large number that will be determined later, and let $R_j = 2^j R_0$. In $\ZR^d$, cover the annulus $R_{j-1} \le |\omega| \le R_j$ by rectangular blocks $\tau$ with dimensions approximately $R_j^{1/2} \times \cdots \times R_j^{1/2} \times R_j$, with the long direction of each block $\tau$ being the radial direction.  Choose a smooth partition of unity subordinate to this cover such that
$$
1 = \psi_0 + \sum_{j \ge 1, \tau} \psi_{j, \tau},
$$
where $\psi_0$ is supported in the ball $B(0,2R_0)$.

Let $\delta > 0$ be a small constant that we will choose later. For each $(j, \tau)$, cover the unit ball in $\ZR^d$ with tubes $T$ of dimensions approximately $R_j^{-1/2 + \delta} \times\cdots \times R_j^{-1/2+\delta} \times 2$ with the long axis parallel to the long axis of $\tau$. The covering has uniformly bounded overlap, each $T$ intersects at most $C(d)$ other tubes. We denote the collection of all these tubes as $\mathbb{T}_{j, \tau}$. Let $\eta_T$ be a smooth partition of unity subordinate to this covering, so that for each choice of $j$ and $\tau$, $ \sum_{T \in \mathbb{T}_{j, \tau}} \eta_T $ is equal to 1 on the ball of radius 2 and each $\eta_T$ is smooth.

For each $T \in \mathbb{T}_{j, \tau}$, define an operator
\[
M_T f := \eta_T (\psi_{j, \tau} \hat f)^{\vee},
\]
which, morally speaking, maps $f$ to the part of it that has Fourier support in $\tau$ and physical support in $T$.  Define also $M_0 f := (\psi_0 \hat f)^{\vee}$.  We denote $\mathbb{T}_j = \cup_{\tau} \mathbb{T}_{j, \tau}$ and $\mathbb{T} = \cup_{j \ge 1} \mathbb{T}_j$. Hence, for any $L^1$ function $f$ supported on the unit ball, one has the decomposition
\[
f = M_0 f + \sum_{T \in \mathbb{T}} M_T f+\text{RapDec}(R_0)\|f\|_{L^1}.
\]
See \cite[Lemma 3.4]{GIOW} for a justification of the above decomposition. (Even though \cite[Lemma 3.4]{GIOW} is stated in two dimensions, the argument obviously extends to higher dimensions.)

Let $c(\alpha)=c(\alpha, d)> 0$ be a large constant to be determined later, and let $4T$ denote the concentric tube of four times the radius. We say a tube $T \in \mathbb{T}_{j, \tau}$ is \emph{bad} if
\[
\mu_2 (4T) \ge R_j^{-d/4 + c(\alpha) \delta}.
\]

Note that the above definition is completely parallel to the one used in \cite{GIOW}, with the only difference being the choice of the mass threshold $R_j^{-d/4 + c(\alpha) \delta}$ for a tube $T$ to be bad. This threshold is carefully chosen so that the error estimate, i.e. proof of Proposition \ref{mainest1}, below can work through.

A tube $T$ is \emph{good} if it is not bad, and we define
\[
\mu_{1,g}:=M_0 \mu_1 + \sum_{T \in \mathbb{T}, T \textrm{ good}} M_T \mu_1.
\]
We point out that $\mu_{1,g}$ is only a complex valued measure, and is essentially supported in the $R_0^{-1/2+\delta}$-neighborhood of $E_1$ with a rapidly decaying tail away from it (see Lemma \cite[Lemma 5.2]{GIOW} for a proof, which is presented in the two dimensional case but works in all dimensions).

\subsection{Proof of Proposition \ref{mainest1}}
We would like to relate $\| d^x_*(\mu_1) - d^x_*(\mu_{1,g}) \|_{L^1}$ to the geometry of bad tubes. To start with, recall the following lemma:
\begin{lemma}\label{lem: bad}{\cite[Lemma 3.5]{GIOW}}
For any point $x\in E_2$, define
\[
\text{Bad}_j(x) := \bigcup_{T \in \mathbb{T}_j: x \in 2T \textrm{ and $T$ is bad}} 2T,\quad \forall j\geq 1.
\]
Then there holds
\[
 \| d^x_*(\mu_{1,g}) - d^x_*(\mu_1) \|_{L^1} \lesssim \sum_{j \ge 1} R_j^{\delta}\mu_1 ( \text{Bad}_j(x)) + \text{RapDec}(R_0).
\]
\end{lemma}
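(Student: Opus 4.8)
The plan is to adapt the proof of \cite[Lemma 3.5]{GIOW}, which is purely local in scale and goes through verbatim in $\mathbb{R}^d$. By the wave packet decomposition one has $\mu_1-\mu_{1,g}=\sum_{T\textrm{ bad}}M_T\mu_1+\text{RapDec}(R_0)$, and since $\|d^x_*(f)\|_{L^1}\le\|f\|_{L^1}$ it is enough, after grouping by scale and using that $R_j=2^jR_0\ge R_0$ makes all the error terms sum to $\text{RapDec}(R_0)$, to prove for each $j\ge 1$ that
\[
\Big\|\,d^x_*\Big(\sum_{T\in\mathbb{T}_j,\,T\textrm{ bad}}M_T\mu_1\Big)\Big\|_{L^1}\;\lesssim\;R_j^{\delta}\,\mu_1(\text{Bad}_j(x))+\text{RapDec}(R_j).
\]
I would split the bad tubes $T\in\mathbb{T}_j$ according to whether $x\in 2T$ or $x\notin 2T$, and treat the two families by different mechanisms: the tubes with $x\notin 2T$ contribute nothing up to $\text{RapDec}(R_j)$, while the tubes with $x\in 2T$ produce the main term.

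For a bad tube $T$ with $x\in 2T$ I would simply bound $\|d^x_*(M_T\mu_1)\|_{L^1}\le\|M_T\mu_1\|_{L^1}\lesssim\mu_1(2T)+\text{RapDec}(R_j)$; the last inequality holds because $M_T\mu_1=\eta_T\,(\mu_1*\psi_{j,\tau}^{\vee})$ and $\psi_{j,\tau}^{\vee}$ is $L^1$-normalized with Schwartz tails at the dual scale $R_j^{-1/2}$, a factor $R_j^{\delta}$ finer than the $R_j^{-1/2+\delta}$-width of $T$, so $\eta_T$ captures all but a rapidly decaying fraction of the $\mu_1$-mass that $\mu_1*\psi_{j,\tau}^{\vee}$ sees near $T$. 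To pass from $\sum_{T\textrm{ bad},\,x\in 2T}\mu_1(2T)$ to $R_j^{\delta}\mu_1(\text{Bad}_j(x))$ I would use the standard overlap count: each such $2T$ contains both $x$ and, in order to carry $\mu_1$-mass, a point $z\in\operatorname{supp}\mu_1$, and since $d(E_1,E_2)\gtrsim 1$ forces $|x-z|\gtrsim 1$, the direction $e_\tau$ of $T$ is then confined to a cap of angular radius $\lesssim R_j^{-1/2+\delta}$ about $(z-x)/|z-x|$, i.e.\ to one of $\lesssim R_j^{(d-1)\delta}$ of the $R_j^{-1/2}$-separated caps $\tau$; hence the tubes $\{2T:T\textrm{ bad},\,x\in 2T\}$ overlap with multiplicity $\lesssim R_j^{(d-1)\delta}$ on $\operatorname{supp}\mu_1$, and integrating the indicator of their union against $\mu_1$ gives the claim. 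The fixed power $R_j^{(d-1)\delta}$ is harmlessly absorbed into $R_j^{\delta}$, since $\delta$ is chosen small, depending on $\alpha$ and $d$, only at the end.

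For a bad tube $T$ with $x\notin 2T$ I would show $d^x_*(M_T\mu_1)=\text{RapDec}(R_j)$, which is the heart of the matter. Here one uses that, since $E$ lies in the unit ball and $T$ has length $\approx 2$, the core of $T$ is essentially a diameter of that ball, so any $x$ in the ball with $x\notin 2T$ must lie at perpendicular distance $\gtrsim R_j^{-1/2+\delta}$ from the line through that core; hence the direction from $x$ to any point of $\operatorname{supp}(M_T\mu_1)\subset T$ makes an angle $\gtrsim R_j^{-1/2+\delta}$ with $\pm e_\tau$, while the Fourier support $\tau$ of $M_T\mu_1$ lies within angle $\lesssim R_j^{-1/2}$ of $e_\tau$ and $|x-y|\gtrsim 1$ there. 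Writing $d^x_*(g)(t)=t^{d-1}\int_{S^{d-1}}g(x+t\omega)\,d\omega$ and expanding $M_T\mu_1=\eta_T(\psi_{j,\tau}\widehat{\mu_1})^{\vee}$, the $\omega$-integral carries a phase $t\,\xi\cdot\omega$, $\xi\in\tau$, whose gradient on the $R_j^{-1/2+\delta}$-cap cut out by $\eta_T$ has size $\gtrsim R_j^{1/2+\delta}$; repeated integration by parts — each step costing at most $R_j^{1/2-\delta}$ from a derivative landing on $\eta_T$ and gaining $R_j^{-1/2-\delta}$ from the phase, a net gain $R_j^{-2\delta}$ — produces an arbitrarily large negative power of $R_j$, while the remaining integration over $\tau$ costs only $|\tau|\lesssim R_j^{(d+1)/2}$. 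Thus $d^x_*(M_T\mu_1)$, and then the sum over the $R_j^{O(1)}$ such tubes, is $\text{RapDec}(R_j)$.

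The main obstacle is exactly this last point: making the non-stationary-phase estimate genuinely work for $x\notin 2T$, i.e.\ ensuring each integration by parts really gains a positive power of $R_j$ despite $\eta_T$ being smooth only at the coarse scale $R_j^{-1/2+\delta}$, and ruling out the borderline case of $x$ lying near the \emph{extended} axis of $T$ but outside $2T$ (impossible for points of the unit ball, since $T$'s core nearly spans the ball and all distances are $\lesssim 1$). This is what links the geometric condition ``$x\notin 2T$'' to the analytic conclusion, and is precisely why the error ends up controlled by $\mu_1(\text{Bad}_j(x))$ rather than by the much larger $\sum_{T\textrm{ bad}}\mu_1(2T)$.
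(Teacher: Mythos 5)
Your argument is essentially the proof the paper relies on: the paper simply cites \cite[Lemma 3.5]{GIOW} and notes that the proof is insensitive to the badness definition and to the ambient dimension, and that proof has exactly your structure — decompose $\mu_1-\mu_{1,g}$ into bad wave packets, discard the tubes with $x\notin 2T$ by non-stationary phase (this is precisely where the extra $\delta$ in the tube width, and the fact that tubes of length $2$ essentially span the unit ball, are used), and bound the tubes through $x$ by $\|d^x_*(M_T\mu_1)\|_{L^1}\le\|M_T\mu_1\|_{L^1}\lesssim\mu_1(2T)+\mathrm{RapDec}(R_j)$ together with a directional multiplicity count; your integration-by-parts bookkeeping (gain $R_j^{-1/2-\delta}$ from the phase, loss $R_j^{1/2-\delta}$ from $\eta_T$, net $R_j^{-2\delta}$ per step) is correct. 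The one step that is not right as written is the claim that the overlap factor $R_j^{(d-1)\delta}$ is ``harmlessly absorbed into $R_j^{\delta}$'': no choice of small $\delta$ makes $R_j^{(d-1)\delta}\le R_j^{\delta}$ for $d\ge 3$, so what your argument literally yields is the stated inequality with $R_j^{(d-1)\delta}$ (equivalently $R_j^{O_d(\delta)}$) in place of $R_j^{\delta}$. This is inconsequential for the paper — the later argument only needs some fixed power $R_j^{O_d(\delta)}$ here, since estimate (\ref{eqn: bad}) can be upgraded to any prescribed fixed power of $R_j^{-\delta}$ by taking $c(\alpha)$ larger — but you should either state the lemma with $R_j^{O_d(\delta)}$ or explain why the weaker exponent suffices downstream, rather than assert the absorption.
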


Note that the proof of this lemma has nothing to do with the actual definition of bad tubes and the ambient dimension of the space, so it applies directly to our setting. To estimate the measure of $\text{Bad}_j(x)$, define
$$
\text{Bad}_j := \{ (x_1, x_2)\in E_1\times E_2: \textrm{ there is a bad } T \in \mathbb{T}_j \textrm{ so that } 2T \textrm{ contains } x_1 \textrm{ and } x_2 \}.
$$
We claim that Proposition \ref{mainest1} would follow if one can show for a sufficiently large constant $c(\alpha)>0$ that
\begin{equation}\label{eqn: bad}
    \mu_1 \times \mu_2(\text{Bad}_j) \lesssim R_j^{- 2 \delta},\quad \forall j\geq 1.
\end{equation}

Indeed, since
\[
\mu_1 \times \mu_2 (\text{Bad}_j) = \int \mu_1(\text{Bad}_j(x)) d\mu_2(x),
\]
the estimate (\ref{eqn: bad}) ensures that there exists $B_j \subset E_2$ so that $ \mu_2(B_j) \le R_j^{- (1/2) \delta}$ and for all $x \in E_2 \setminus B_j$,
\[
\mu_1(\text{Bad}_j(x)) \lesssim R_j^{- (3/2) \delta}.
\]
Let $E_2' = E_2 \setminus \bigcup_{j \ge 1} B_j$ and choose $R_0$ sufficiently large (depending on $\delta$ and $\alpha$). One obviously has $\mu_2(E_2') \ge 1 - \frac{1}{1000}$, and for each $x \in E_2'$ the bound
\[
\| d^x_*(\mu_{1,g}) - d^x_*(\mu_1) \|_{L^1} \lesssim R_0^{- (1/2) \delta} \le \frac{1}{1000}\,,
\]
according to Lemma \ref{lem: bad}.

In order to prove estimate (\ref{eqn: bad}), we apply the following radial projection theorem of Orponen \cite{O17b}. The choice of the threshold in the definition of bad tubes in the above will play an important role in this step. In order to state the theorem, we first define a radial projection map $P_y: \mathbb{R}^n \setminus \{ y \} \rightarrow S^{n-1}$ by
$$
P_y(x) = \frac{x-y}{|x-y|}.
$$

\begin{theorem}\label{thm: Orponen}{\cite[Orponen]{O17b}}
For every $\beta > n-1$ there exists $p(\beta)> 1$ so that the following holds.  Suppose that $\nu_1$ and $\nu_2$ are measures on the unit ball in $\mathbb{R}^n$ with disjoint supports and that $I_{\beta}(\nu_i )<\infty $.  Then
	$$
	\int \| P_y \nu_2 \|_{L^p}^p d \nu_1(y) < \infty.
	$$

\end{theorem}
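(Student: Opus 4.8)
The plan is to reduce the radial-projection bound to the classical theory of orthogonal projections onto hyperplanes. The disjoint-supports hypothesis is used throughout: it forces $|x-y|\sim 1$ for all $x\in\operatorname{supp}\nu_2$ and $y\in\operatorname{supp}\nu_1$, so every cone or tube that appears below lives at a single scale, with all constants uniform. By a routine pigeonholing I would first reduce to the case in which $\nu_1,\nu_2$ are Frostman measures of exponent $\beta$, i.e.\ $\nu_i(B(z,r))\lesssim r^{\beta}$. Then $\nu_i$ assigns mass $\lesssim r^{\beta-1}$ to any tube of width $r$ and length $O(1)$, and mass $\lesssim r^{\beta-(n-1)}$ to any slab of width $r$ (the latter tending to $0$ as $r\to 0$ precisely because $\beta>n-1$).

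The geometric input is that radial projection from a point is, locally, an orthogonal projection onto a hyperplane. Fix reference points $\xi_1,\dots,\xi_N$ ($N=N(n)$) on a large sphere about the origin, arranged so that for every $y$ in the unit ball the directions $\theta_i(y):=(\xi_i-y)/|\xi_i-y|$ form a fixed-scale net of $S^{n-1}$; let $C_i(y)$ be the cap of fixed radius centered at $\theta_i(y)$, so that the $C_i(y)$ cover $S^{n-1}$, and let $\Gamma_i(y):=P_y^{-1}(C_i(y))$ be the cone over it. On each $\Gamma_i(y)$, in the region $|x-y|\sim 1$ where $\nu_2$ lives, a uniformly bi-Lipschitz change of variables centered at $y$ straightens the pencil of rays through $y$ into a family of lines parallel to $\theta_i(y)$; this identifies $P_y$ with the orthogonal projection $\pi_{\theta_i(y)^{\perp}}$ onto the hyperplane $\theta_i(y)^{\perp}$ up to harmless diffeomorphisms of the sphere and the hyperplane, and yields $\|P_y(\nu_2|_{\Gamma_i(y)})\|_{L^p(S^{n-1})}\lesssim_n\|\pi_{\theta_i(y)^{\perp}}\nu_2\|_{L^p}$ (the restriction is dropped on the right, since restricting a measure only decreases a push-forward). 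Summing over $i$ and integrating in $y$,
\[
\int\|P_y\nu_2\|_{L^p}^p\,d\nu_1(y)\ \lesssim_n\ \sum_{i=1}^{N}\int_{S^{n-1}}\|\pi_{\theta^{\perp}}\nu_2\|_{L^p(\theta^{\perp})}^p\,d\lambda_i(\theta),\qquad \lambda_i:=(\theta_i)_{*}\nu_1 .
\]
Since $\xi_i$ lies at distance $\gtrsim 1$ from $\operatorname{supp}\nu_1$, the set $\theta_i^{-1}(C(\theta,r))$ meets $\operatorname{supp}\nu_1$ in a tube of width $\lesssim r$, so the tube bound gives $\lambda_i(C(\theta,r))\lesssim r^{\beta-1}$: each $\lambda_i$ is a Frostman measure of exponent $\beta-1$ on $S^{n-1}$ (and one may even arrange it to be absolutely continuous, by choosing the $\xi_i$ to avoid the null set of bad centres for $\nu_1$).

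It thus remains to prove a potential-theoretic projection estimate: if $\nu$ is Frostman of exponent $\beta>n-1$ on the unit ball of $\mathbb{R}^n$ and $\lambda$ is Frostman of exponent $\beta-1$ on $S^{n-1}$, then $\int_{S^{n-1}}\|\pi_{\theta^{\perp}}\nu\|_{L^p(\theta^{\perp})}^p\,d\lambda(\theta)<\infty$ for some $p=p(\beta)>1$. For $p=1$ this is trivial, since each projection is a probability measure and $\lambda$ is finite; the whole point is to gain $p>1$. Here I would run the Peres--Schlag / Mattila transversality machinery for hyperplane projections: write $\|\pi_{\theta^{\perp}}\nu\|_{L^2(\theta^{\perp})}^2=c\int_{\theta^{\perp}}|\widehat{\nu}(\eta)|^2\,d\eta$, integrate against $d\lambda(\theta)$ via the slicing/coarea formula on $S^{n-1}$, estimate the resulting weighted Fourier integral in terms of the energies of $\nu$ and $\lambda$, and then pass from $L^2$ to an $L^p$ with $p$ close to $1$ (which is where the weaker hypothesis $\beta>n-1$ — rather than the $L^2$-threshold $\beta>n-\tfrac12$ — suffices) by the standard fractional-Sobolev refinements. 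The inequality closes precisely on $\beta>n-1$, which is exactly the critical dimension above which projections onto hyperplanes become absolutely continuous.

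The main obstacle is this last step: extracting a genuine $p>1$ when $\beta$ exceeds $n-1$ by only a little. A naive Hardy--Littlewood maximal-function estimate for the $L^p$ norm of the projection is useless here — it only sees the size $r^{\beta-(n-1)}$ of slab masses, misses the cancellation/averaging over $\theta$, and in fact produces no $p>1$ at all — so the Fourier-analytic input is essential, and this is where the threshold $\beta>n-1$ is sharp. A secondary, more technical but essentially routine point is to make the ``radial $=$ orthogonal, locally'' comparison rigorous: one must keep the Jacobians of the straightening maps controlled uniformly in $y$ and verify that the restricted push-forwards appearing on the left really are dominated, in $L^p$, by the unrestricted hyperplane projections of $\nu_2$ on the right.
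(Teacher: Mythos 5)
First, a remark on context: the paper does not prove this statement at all --- it is quoted as a black box from Orponen \cite{O17b} --- so there is no internal argument to compare yours against; I am assessing your proposal on its own terms.

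Your reduction of radial projections to orthogonal projections breaks at the inequality $\| P_y(\nu_2|_{\Gamma_i(y)})\|_{L^p(S^{n-1})} \lesssim_n \|\pi_{\theta_i(y)^{\perp}}\nu_2\|_{L^p}$, which is false. The straightening diffeomorphism $\Psi_y$ that carries the pencil of rays through $y$ to lines parallel to $\theta:=\theta_i(y)$ intertwines $P_y$ with $\pi_{\theta^{\perp}}\circ \Psi_y$, so what it actually yields is $\|P_y(\nu_2|_{\Gamma_i(y)})\|_{L^p}\sim \|\pi_{\theta^{\perp}}\bigl((\Psi_y)_*\nu_2\bigr)\|_{L^p}$: the measure on the right is a $y$-dependent distortion of $\nu_2$, and push-forward under a \emph{fixed} projection does not commute with a bi-Lipschitz change of variables. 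Concretely, let $\nu_2$ be normalized measure on a tube of width $\delta$ and unit length pointing \emph{at} $y$ in a direction $\omega\in C_i(y)$ making a fixed nonzero angle with $\theta$. Then $P_y\nu_2$ is a probability measure concentrated on a cap of radius $\sim\delta$, so $\|P_y\nu_2\|_{L^p}^p\gtrsim \delta^{-(n-1)(p-1)}$, whereas $\pi_{\theta^{\perp}}\nu_2$ is spread over a $\delta$-neighborhood of a unit segment in $\theta^{\perp}$, so $\|\pi_{\theta^{\perp}}\nu_2\|_{L^p}^p\sim \delta^{-(n-2)(p-1)}$; the ratio blows up as $\delta\to 0$, and shrinking the caps $C_i$ does not help. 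This is precisely why radial projections do not reduce to orthogonal ones: the two foliations (rays through $y$ versus parallel lines) are genuinely different, and mass aligned along a ray through $y$ is crushed by $P_y$ but not by $\pi_{\theta^{\perp}}$.

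Even granting the reduction, the remaining estimate $\int_{S^{n-1}}\|\pi_{\theta^{\perp}}\nu\|_{L^p}^p\,d\lambda(\theta)<\infty$ for some $p>1$, with $\lambda$ only $(\beta-1)$-Frostman on $S^{n-1}$, is not a ``standard fractional-Sobolev refinement.'' The $L^2$ computation $\|\pi_{\theta^{\perp}}\nu\|_{L^2}^2=c\int_{\theta^{\perp}}|\hat\nu|^2$ together with the bound $\lambda(\{\theta:|\theta\cdot\xi|\le|\xi|^{-1}|\xi|\cdot\rho\})\lesssim\rho^{(\beta-1)-(n-2)}$ gives $\int\|\pi_{\theta^{\perp}}\nu\|_{L^2}^2\,d\lambda\lesssim I_{2n-1-\beta}(\nu)$, which is finite only for $\beta\ge n-\tfrac12$; interpolating this with the trivial $p=1$ bound does not produce any $p>1$ in the range $n-1<\beta<n-\tfrac12$, because the $L^2$ exceptional set of directions (dimension up to $2(n-1)-\beta$) can carry positive $\lambda$-measure there. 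Extracting some $p>1$ under the sharp hypothesis $\beta>n-1$ is exactly the content of Orponen's theorem; his proof runs a genuinely different bootstrap on the exponent $p$ rather than a Fourier-analytic projection argument. So your proposal both relies on a false pointwise comparison and, past that point, reduces the theorem to a statement that is at least as hard as the theorem itself.
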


Note that we cannot apply the above theorem directly to our problem in $\mathbb{R}^d$, because the measures $\mu_1, \mu_2$ we are dealing with have dimension $\alpha$ that is barely larger than $\frac{d}{2}$ (hence fails to satisfy $\alpha>d-1$). This motivates us to consider the projected measures $\tilde\mu_i=\pi_\ast(\mu_i)$ instead.

Recall from the definition that for $i=1,2$, $\tilde{\mu}_i$ is a measure on the $(\frac{d}{2}+1)$-dimensional subspace $V\subset \mathbb{R}^d$ and satisfies $I_{\beta} (\tilde\mu_i) <\infty$ for any $0<\beta<\alpha$. Whenever $\alpha>\frac{d}{2}$, one has $\alpha>(\frac{d}{2}+1)-1$. Therefore, Theorem \ref{thm: Orponen} does apply to $\tilde\mu_1, \tilde\mu_2$, and one has
\begin{equation}\label{eqn: radial}
\int \| P_y \tilde\mu_2 \|_{L^p}^p d \tilde\mu_1(y) <  \infty.
\end{equation}

To prove estimate (\ref{eqn: bad}), we first define a set  $\widetilde{\text{Bad}}_j$ in $V^2$.

We have chosen the sets $E_1, E_2$ at the beginning such that $d(E_1, E_2)\gtrsim 1$ and $d(\pi(E_1),\pi(E_2))\gtrsim 1$. By definition of $\text{Bad}_j$, it suffices to consider tubes $T\in \mathbb{T}_j$ that intersect both $E_1$ and $E_2$. Hence, the projected tube $\pi(T)\subset V$ also looks like a tube, with side length $\sim 1$ in the long direction, and $\sim R_j^{-\frac{1}{2}+\delta}$ in the rest of the directions. Therefore, $\mathbb{T}_j$ gives rise to a collection $\tilde{\mathbb{T}}_j$ that contains tubes in $V$ of dimensions roughly $1\times R_j^{-\frac{1}{2}+\delta}\times\cdots \times R_j^{-\frac{1}{2}+\delta}$.

One can similarly define a tube $\tilde{T}\in \tilde{\mathbb{T}}_j$ to be bad if $\tilde\mu_2(4\tilde{T})\geq R_j^{-\frac{d}{4}+c(\alpha)\delta}$. It is easy to see that the badness of a tube is preserved under the projection. Indeed, if $T\in \mathbb{T}_j$ is bad, then
\[
\tilde\mu_2(4\pi(T))\geq \mu_2(4T)\geq R_j^{-\frac{d}{4}+c(\alpha)\delta}.
\]

Define
\[
\widetilde{\text{Bad}}_j:=\{(x_1, x_2)\in V^2: \textrm{ there is a bad } \tilde{T} \in \tilde{\mathbb{T}}_j \textrm{ so that } 2\tilde{T} \textrm{ contains } x_1 \textrm{ and } x_2\}.
\]
Then one has
\[
\mu_1 \times \mu_2( \text{Bad}_j)\leq \tilde\mu_1\times \tilde\mu_2(\widetilde{\text{Bad}}_j)=\int \tilde\mu_2(\widetilde{\text{Bad}}_j(y)) d \tilde\mu_1(y),
\]
where
\[
\widetilde{\text{Bad}}_j(y) := \bigcup_{\tilde{T} \in \tilde{\mathbb{T}}_j: y \in 2\tilde{T} \textrm{ and $\tilde{T}$ is bad}} 2\tilde{T}.
\]

With bound (\ref{eqn: radial}), the desired estimate (\ref{eqn: bad}) follows by an argument identical to \cite[Proof of Lemma 3.6]{GIOW}. We sketch the argument here for the sake of completeness.

Let $\tilde{T} \in \tilde{\mathbb{T}}_j$ be a bad tube and $y \in 2\tilde{T}\cap \pi(E_1)$.  Let $A(\tilde{T})$ be the cap of the sphere $S^{\frac{d}{2}}$ whose center corresponds to the direction of the long axis of $\tilde{T}$ and with radius $\sim  R_j^{-1/2 + \delta}$.  Since $d(\pi(E_1), \pi(E_2))\gtrsim 1$, one has $P_y (4\tilde{T}\cap \pi(E_2)) \subset A(\tilde{T})$, hence

\begin{equation} \label{bigarc} P_y \tilde{\mu}_2 (A(\tilde{T})) \ge \tilde\mu_2(4\tilde{T}) \ge R_j^{-\frac{d}{4} + c(\alpha) \delta}.  \end{equation}

 Therefore, $P_y( \widetilde{\text{Bad}}_{j}(y))$ can be covered by caps $A(\tilde{T})$ of radius $\sim R_j^{-1/2 + \delta}$ which each satisfies (\ref{bigarc}).  By the Vitali covering lemma, there exists a disjoint subset of $A(\tilde{T})$ so that $5 A(\tilde{T})$ covers $P_y( \widetilde{\text{Bad}}_{j}(y))$.  Hence, the total number of disjoint $A(\tilde{T})$ in the covering is bounded by $R_j^{\frac{d}{4}-c(\alpha)\delta}$, which implies
\[
| P_y(\widetilde{\text{Bad}}_{j}(y)) |\lesssim R_j^{\frac{d}{4}-c(\alpha)\delta}\cdot R_j^{\frac{d}{2}(-1/2+\delta)}  = R_j^{-(c(\alpha)-\frac{d}{2}) \delta}, \]where $|\cdot|$ denotes the surface measure on $S^{\frac{d}{2}}$. Therefore, by H\"older's inequality and by choosing $c(\alpha)$ sufficiently large, one has
\[
\begin{split}
\mu_1 \times \mu_2(\text{Bad}_j) \leq & \int \tilde\mu_2 (\widetilde{\text{Bad}}_{j}(y)) d\tilde\mu_1(y)  \le \int \left( \int_{P_y (\widetilde{\text{Bad}}_{j}(y)) } P_y\tilde\mu_2 \right) d \tilde\mu_1(y)\\
\leq & \sup_y | P_y(\widetilde{\text{Bad}}_{j}(y)) |^{1 - \frac{1}{p}} \int \| P_y \tilde\mu_2 \|_{L^p} d \tilde\mu_1 \lesssim R_j^{- 2 \delta}.
\end{split}
\]
This completes the justification of (\ref{eqn: bad}) thus the proof of Proposition \ref{mainest1}.

\section{Refined decoupling and Proposition
\setcounter{equation}0
\ref{mainest2}}

In this section, we prove Proposition \ref{mainest2}, which will complete the proof of Theorem \ref{thm: pinned}. This part of the argument proceeds very similarly as \cite[Proof of Proposition 2.2]{GIOW}, with the only difference being the change of the definition of good tubes.

Let $\si_r$ be the normalized surface measure on the sphere of radius $r$. The main estimate in the proof of Proposition \ref{mainest2} is the following:

\begin{lemma}\label{lem: decbound}
For any $\alpha > 0$, $r>0$, and $\delta$ sufficiently small depending on $\alpha, \epsilon$:
	$$
	\int_{E_2} | \mu_{1,g} * \hat \sigma_r(x)|^2 d \mu_2(x) \le C(R_0) r^{-\frac{d}{2(d+1)}-\frac{(d-1)\alpha}{d+1}+\epsilon} r^{-(d-1)} \int  | \hat \mu_1 |^2 \psi_r d \xi+{\rm RapDec}(r),
	$$
where $\psi_r$ is a weight function which is $\sim 1$ on the annulus $r-1 \le |\xi| \le r+1$ and decays off of it.  To be precise, we could take
$$
\psi_r(\xi) = \left( 1 + | r- |\xi|| \right)^{-100}.
$$

\end{lemma}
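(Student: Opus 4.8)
The plan is to follow the strategy of \cite[Proof of Lemma 4.2 or Proposition 2.2]{GIOW}, adapted to $d$ dimensions. First I would expand the square: writing $\mu_{1,g} * \hat\sigma_r(x) = \int \hat\mu_{1,g}(\xi)\,d\sigma_r(\xi)\,e^{2\pi i x\cdot\xi}$ type expressions aside, the honest starting point is that $\|\mu_{1,g}*\hat\sigma_r\|_{L^2(\mu_2)}^2$ is essentially (up to a harmless weight, since $\mu_2$ is a finite measure supported in the unit ball and can be dominated by a bump function) controlled by $\|\widehat{(\mu_{1,g}*\hat\sigma_r)\,\rho}\|_{L^2}^2$ for a Schwartz $\rho$. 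Since $\widehat{\hat\sigma_r} = \sigma_r$ (up to reflection/constants), the Fourier transform $\widehat{\mu_{1,g}*\hat\sigma_r}$ is $\hat\mu_{1,g}$ restricted/multiplied against the measure on the sphere $|\xi| = r$; the $L^2(\mu_2)$ norm thus becomes an expression of the form $\int_{|\xi|=r}\int_{|\eta|=r}\hat\mu_{1,g}(\xi)\overline{\hat\mu_{1,g}(\eta)}\,\widehat{\mu_2}(\xi-\eta)\,d\sigma_r(\xi)d\sigma_r(\eta)$, i.e. a bilinear form with kernel $\widehat{\mu_2}(\xi-\eta)$ restricted to the $r$-sphere.

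Second, the point of the good-measure construction is that $\hat\mu_{1,g}$, restricted to the annulus $r \sim R_j$, is built from wave packets supported on the caps $\tau$ (dimensions $R_j^{1/2}\times\cdots\times R_j^{1/2}\times R_j$) and, on the physical side, on good tubes only. I would invoke the refined decoupling theorem for the sphere/paraboloid in $\ZR^d$ (Bourgain--Demeter, in the refined form used in \cite{GIOW, DZ}) applied to the extension-type operator at scale $r$: the $L^2$ bilinear expression above is, after localizing to a ball of radius $r$ and discretizing the sphere into $r^{-1/2}$-caps, bounded by a sum over caps of the $L^2$ masses, with the refined gain coming from the fact that on good tubes $\mu_2(4T) \lesssim R_j^{-d/4 + c(\alpha)\delta}$ so that each tube is visited by few caps' worth of mass. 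This is exactly where the threshold $R_j^{-d/4+c(\alpha)\delta}$ enters: it is the $d$-dimensional analogue of the $R_j^{-1/2}$ (times $\delta$ losses) threshold in two dimensions, and it is calibrated so that summing the refined decoupling output over the good tubes and over scales $R_j$ produces the claimed exponent $-\frac{d}{2(d+1)} - \frac{(d-1)\alpha}{d+1} + \epsilon$ on $r$ (with the extra $r^{-(d-1)}$ coming from the normalization of $\hat\sigma_r$, whose decay is $\sim r^{-(d-1)/2}$, squared). I would then use the Frostman condition $\mu_1(B(x,\rho))\lesssim \rho^\alpha$ to convert the counting of good tubes into the power of $r$ (the dimension $\alpha$ enters precisely through how much $\mu_1$-mass sits on an $r^{-1/2+\delta}$-tube), keeping all $R_j^{\delta}$-type losses absorbed into $r^\epsilon$ by choosing $\delta$ small in terms of $\alpha,\epsilon$.

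Third, I would sum over the dyadic scales $R_j \le r$ (and handle the $R_j > r$ and the $M_0\mu_1$ contributions, which are $\mathrm{RapDec}(r)$ since the corresponding Fourier supports miss the $r$-annulus), checking that the geometric series in $j$ converges and contributes only another $r^\epsilon$, and that the tail of $\mu_{1,g}$ outside the $R_0^{-1/2+\delta}$-neighborhood of $E_1$ contributes $\mathrm{RapDec}(r)$. Assembling these pieces gives the stated bound with the weight $\psi_r(\xi) = (1+|r-|\xi||)^{-100}$ accounting for the fact that the sphere localization is only soft (Schwartz tails of $\rho$ and of $\hat\sigma_r$ spread the $\xi$-support slightly off $|\xi|=r$).

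The main obstacle, and the step requiring genuine care rather than bookkeeping, is the refined-decoupling step together with the correct $d$-dimensional tube-counting: one must verify that restricting $\hat\mu_{1,g}$ to good tubes and applying refined decoupling for the $(d-1)$-sphere in $\ZR^d$ yields exactly the exponent $\frac{d}{2(d+1)} + \frac{(d-1)\alpha}{d+1}$, which forces the precise value of the badness threshold $R_j^{-d/4+c(\alpha)\delta}$ and of the constant $c(\alpha)$. In $d=2$ this is \cite[Proof of Proposition 2.2]{GIOW}; here the arithmetic of the decoupling exponent $\frac{2(d-1)}{d+1}$, the $r^{-(d-1)}$ from $|\hat\sigma_r|^2$, and the Frostman exponent $\alpha$ must balance, and I would double-check it by tracking a single scale $R_j = r$ and confirming the power of $r$ matches before summing. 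Everything else — the Fourier-side expansion, the reduction to an extension operator, the passage from $\mu_2$ to a bump weight, and the scale summation — is routine and parallels \cite{GIOW} verbatim.
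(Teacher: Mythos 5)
Your overall architecture --- wave-packet decomposition at scale $r$, refined decoupling, orthogonality to recover $\int|\hat\mu_1|^2\psi_r$ --- matches the paper's, but the mechanism you describe for producing the exponent is not the one that works, and the step you defer as ``the main obstacle'' is exactly where your sketch goes astray. In the paper's proof the Frostman exponent $\alpha$ enters through $\mu_2$, not $\mu_1$: after pigeonholing in the wave-packet size $\lambda$ and in the incidence count $M$ (both needed to make Theorem \ref{thm: dec} applicable), one applies H\"older to pass from $\int_{Y_M}|f_\lambda|^2\,d\mu_2$ to $\left(\int_{Y_M}|f_\lambda|^p\right)^{2/p}\left(\int_{Y_M}|\mu_2*\eta_{1/r}|^{p/(p-2)}\right)^{1-2/p}$, and the second factor is controlled by $\|\mu_2*\eta_{1/r}\|_\infty\lesssim r^{d-\alpha}$ together with $\mu_2(\mathcal{N}_{r^{-1/2}}(Y_M))$; this is the sole source of the $-\frac{(d-1)\alpha}{d+1}$ in the final exponent. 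Your claim that ``$\alpha$ enters precisely through how much $\mu_1$-mass sits on an $r^{-1/2+\delta}$-tube'' is incorrect: the $\mu_1$ side is handled purely by orthogonality, $\sum_T\|\widehat{M_T\mu_1}\|_{L^2(d\sigma_r)}^2\lesssim r^{-(d-1)}\int|\hat\mu_1|^2\psi_r$, with no Frostman input on $\mu_1$ anywhere in this lemma (the $\mu_1$-energy only appears afterwards, when integrating over $r$). Relatedly, you never say how the goodness threshold is actually used: the key counting inequality is $M\,\mu_2(\mathcal{N}_{r^{-1/2}}(Y_M))\lesssim|\mathbb{W}_\lambda|\,r^{-d/4+c(\alpha)\delta}$, obtained by double-counting incidences between the cubes of $Y_M$ and the good tubes, and it is this inequality, substituted into the $(M/W)^{1/2-1/p}$ factor of refined decoupling, that cancels against the $\mu_2(\mathcal{N}_{r^{-1/2}}(Y_M))$ produced by the H\"older step. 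Without these two ingredients your plan does not yield the claimed exponent.

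Two smaller points. Your opening reduction to a bilinear form with kernel $\widehat{\mu_2}(\xi-\eta)$ on the $r$-sphere is the classical Mattila route, which this paper deliberately avoids because it is lossy; the actual argument stays in physical space against $d\mu_2$ and never forms that bilinear expression. And there is no sum over scales $R_j\le r$ and no geometric series to control: only $R_j\sim r$ contributes, since for $R_j\not\sim r$ the Fourier support of $M_T\mu_1$ misses the sphere $|\xi|=r$ and the convolution with $\hat\sigma_r$ is already $\mathrm{RapDec}(r)$.
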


To see how this lemma implies the desired estimate in Proposition \ref{mainest2}, one first observes that
$$
d^x_*(\mu_{1,g})(t)=t^{d-1}\mu_{1,g}*\si_t(x)\,.
$$
Since $\mu_{1,g}$ is essentially supported in the $R_0^{-1/2+\delta}$-neighborhood of $E_1$, for $x\in E_2$, we only need to consider $t\sim 1$. Hence,
\[
\begin{split}
\int_{E_2} \|d_*^x(\mu_{1,g})\|_{L^2}^2\,d\mu_2(x)\lesssim &\int_0^\infty \int_{E_2}|\mu_{1,g}*\sigma_t(x)|^2\,d\mu_2(x)t^{d-1}\,dt\\
\sim & \int_0^\infty \int_{E_2}|\mu_{1,g}*\hat{\sigma}_r(x)|^2\,d\mu_2(x)r^{d-1}\,dr,
\end{split}
\]
where in the second step, we have used a limiting process and an $L^2$-identity proved by Liu \cite[Theorem 1.9]{Liu}: for any Schwartz function $f$ on $\mathbb R^d, d\geq 2$, and any $x\in \mathbb R^d$,
$$
\int_0^\infty |f*\sigma_t(x)|^2\, t^{d-1}\,dt = 
\int_0^\infty |f*\hat\sigma_r(x)|^2\, r^{d-1}\,dr.$$

Applying Lemma \ref{lem: decbound} for each $r>0$ and dropping the rapidly decaying tail as we may, one can bound the above further by
\[
\begin{split}
&\lesssim_{R_0} \int_0^\infty \int_{\mathbb{R}^d}  r^{-\frac{d}{2(d+1)}-\frac{(d-1)\alpha}{d+1}+\epsilon} \psi_r(\xi) | \hat \mu_1(\xi)|^2 \,d \xi dr  \\
&\lesssim  \int_{\mathbb{R}^d} |\xi|^{-\frac{d}{2(d+1)}-\frac{(d-1)\alpha}{d+1}+\epsilon} | \hat \mu_1 (\xi)|^2 \,d \xi \sim I_{\beta} (\mu_1),
\end{split}
\]where $\beta=d-\frac{d}{2(d+1)}-\frac{(d-1)\alpha}{d+1}+\epsilon$, by a Fourier representation for $I_\beta$ (cf. Proposition 8.5 of \cite{W}):
\[
I_\beta (\mu) = \int |x-y|^{-\beta} d\mu(x) d\mu(y)=c_{d, \beta}  \int_{\mathbb{R}^d} |\xi|^{-(d - \beta)} |\hat \mu(\xi) |^2 \,d \xi.
\]
One thus has $I_\beta(\mu_1)<\infty$ if $\beta<\alpha$, which is equivalent to $\alpha>\frac{d}{2}+\frac{1}{4}$. The proof of Proposition \ref{mainest2} is thus complete upon verification of Lemma \ref{lem: decbound}.

\subsection{Refined decoupling estimates} The key ingredient in the proof of Lemma \ref{lem: decbound} is the following refined decoupling theorem, which is derived by applying the $l^2$ decoupling theorem of Bourgain and Demeter \cite{BD} at many different scales.

Here is the setup. Suppose that $S \subset \mathbb{R}^d$ is a compact and strictly convex $C^2$ hypersurface with Gaussian curvature $\sim 1$. For any $\epsilon>0$, suppose there exists $0<\delta\ll \epsilon$ satisfying the following.   Suppose that the 1-neighborhood of $R S$ is partitioned into $R^{1/2} \times ... \times R^{1/2} \times 1$ blocks $\theta$.  For each $\theta$, let $\mathbb{T}_\theta$ be a set of tubes of dimensions $R^{-1/2 + \delta} \times 1$ with long axis perpendicular to $\theta$, and let $\mathbb{T} = \cup_\theta \mathbb{T}_\theta$. Each $T\in \ZT$ belongs to $\ZT_{\theta}$ for a single $\theta$, and we let $\theta(T)$ denote
this $\theta$. We say that $f$ is microlocalized to $(T,\theta(T))$ if $f$ is essentially supported in
$2T$ and $\hat{f}$ is essentially supported in $2\theta(T)$.

\begin{theorem}\label{thm: dec}{\cite[Corollary 4.3]{GIOW}}
Let $p$ be in the range $2 \le p \le \frac{2(d+1)}{d-1}$.  For any $\epsilon>0$, suppose there exists $0<\delta\ll\epsilon$ satisfying the following. Let $\mathbb{W} \subset \mathbb{T}$ and suppose that each $T \in \mathbb{W}$ lies in the unit ball.  Let $W = | \mathbb{W}|$.  Suppose that $f = \sum_{T \in \mathbb{W}} f_T$, where $f_T$ is microlocalized to $(T, \theta(T))$.  Suppose that $\| f_T \|_{L^p}$ is $\sim$ constant for each $T \in \mathbb{W}$.   Let $Y$ be a union of $R^{-1/2}$-cubes in the unit ball each of which intersects at most $M$ tubes $T \in \mathbb{W}$.  Then
	$$ \| f \|_{L^p(Y)} \lesssim_\epsilon R^\epsilon \left(\frac{M}{W} \right)^{\frac{1}{2} - \frac{1}{p}} \left(\sum_{T \in \mathbb{W}} \| f_T \|_{L^p}^2 \right)^{1/2}. $$
\end{theorem}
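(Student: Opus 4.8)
The plan is to obtain Theorem~\ref{thm: dec} from the $\ell^2$-decoupling theorem of Bourgain--Demeter \cite{BD} by a multi-scale induction on the radius $R$ (along dyadic values), analogous to the refined Strichartz estimate of Du--Guth--Li and to the refined decoupling used in \cite{DGOWWZ}. The role of Bourgain--Demeter is as the ``engine'': at any scale $\rho\le R$ it supplies the loss-free bound $\|g\|_{L^p}\lesssim_\epsilon\rho^\epsilon\big(\sum_{\tau}\|g_\tau\|_{L^p}^2\big)^{1/2}$ for $g$ with Fourier support in the $\rho^{-1}$-neighborhood of the surface decomposed into $\rho^{-1/2}$-caps $\tau$, precisely in the stated range $2\le p\le \tfrac{2(d+1)}{d-1}$ --- the range in which $\ell^2$-decoupling for a $C^2$ hypersurface of Gaussian curvature $\sim 1$ is sharp. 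This is why that exponent range is inherited unchanged, and the hypothesis $\delta\ll\epsilon$ is what absorbs the $R^{-1/2+\delta}$-fattening of the tubes together with the losses accumulated over the stages of the induction (the base case $R\lesssim 1$ being trivial).

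Concretely, after the routine pigeonholing --- the statement already provides that $\|f_T\|_{L^p}$ is $\sim$ constant, and one further bins the $R^{-1/2}$-cubes of $Y$ dyadically according to how many tubes of $\mathbb{W}$ they meet, at a cost of $R^\epsilon$ --- I would fix an intermediate scale $\rho$ (for instance $\rho\approx R^{1/2}$), group the $R^{-1/2}$-caps $\theta$ into coarser $\rho^{-1/2}$-caps $\tau$, and write $f=\sum_\tau f_\tau$ with $f_\tau=\sum_{T:\,\theta(T)\subset\tau}f_T$. Applying Bourgain--Demeter at scale $\rho$, and using that each $f_\tau$ is essentially constant on the relevant pieces of $Y$ so that the $L^p(Y)$ norm can be moved past the decoupling through a weight adapted to the cubes of $Y$ (up to rapidly decaying errors), reduces matters to bounding each $\|f_\tau\|_{L^p(Y)}$; for this one performs a parabolic rescaling sending $\tau$ to a full cap and $R$ to $R/\rho$, applies the inductive hypothesis at the smaller scale, and sums over $\tau$ by Cauchy--Schwarz.

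The crux --- and the step I expect to fight hardest with --- is the combinatorial bookkeeping that makes this closure exact. A single application of Bourgain--Demeter provably cannot produce the refined bound: the improvement from $W$ down to $M$ has to be manufactured by exploiting, across scales, that on $Y$ only about $M$ of the tubes are active in each $R^{-1/2}$-cube rather than all $W$ of them. One must therefore track, through the rescaling, how the incidence parameter $M$, the splitting $W=\sum_\tau W_\tau$, the sizes of the $Y$-cubes, and the Jacobian factors in the $L^p$ normalizations all transform, and verify that once the inductive estimates for the $f_\tau$ are inserted and summed the exponents recombine to \emph{exactly} $(M/W)^{1/2-1/p}$ --- which in particular forces the induction to be run on the sharp statement rather than on a lossy surrogate. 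Modulo this accounting, and the localization of Bourgain--Demeter to the sparse set $Y$, the argument is that of \cite[Corollary 4.3]{GIOW}; since nothing in it is special to the plane, it applies to any hypersurface with Gaussian curvature $\sim 1$, in particular to the sphere that enters in Lemma~\ref{lem: decbound}.
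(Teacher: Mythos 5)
The paper does not actually prove this theorem: it imports it verbatim as \cite[Corollary 4.3]{GIOW}, remarking only that it is obtained by applying Bourgain--Demeter $\ell^2$ decoupling \cite{BD} at many scales and that nothing in the argument is specific to the plane. Your sketch --- pigeonholing to comparable $\|f_T\|_{L^p}$, Bourgain--Demeter at an intermediate scale $\rho$, parabolic rescaling of each coarse cap $\tau$, induction on the sharp statement, with the $(M/W)^{\frac12-\frac1p}$ gain closed by the incidence bookkeeping --- is an accurate description of the proof in \cite{GIOW}, so your proposal takes essentially the same route as the paper, including the deferral of the detailed combinatorial accounting to that reference.
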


\subsection{Proof of Lemma \ref{lem: decbound}}

Assume $r>10R_0$ (we omit the $r<10R_0$ case, which is much easier and can be dealt with by the same argument at the end of Section 5 in \cite{GIOW}). By definition, 
\[
\mu_{1,g}* \hat\sigma_r=\sum_{R_j \sim r} \sum_\tau \sum_{T\in \mathbb{T}_{j, \tau}: T \textrm{ good}} M_T \mu_1 * \hat \sigma_r+{\rm RapDec}(r).
\]

The contribution of ${\rm RapDec}(r)$ is already taken into account in the statement of Lemma 4.1. Hence without loss of generality we may ignore the tail ${\rm RapDec}(r)$ in the argument below.

Let $\eta_1$ be a bump function adapted to the unit ball and define
\[
f_T = \eta_1 \left( M_T \mu_1 * \hat \sigma_r \right).
\]One can easily verify that $f_T$ is microlocalized to $(T,\theta(T))$.

Let $p=\frac{2(d+1)}{d-1}$. After dyadic pigeonholing, there exists $\lambda>0$ such that
\[
\int |\mu_{1,g}*\hat\sigma_r(x)|^2\,d\mu_2(x)\lesssim \log r \int | f_\lambda (x) |^2 d\mu_2(x),
\]where
\[
f_\lambda=\sum_{T\in \mathbb{W}_\lambda}f_T,\quad \mathbb{W}_\lambda:= \bigcup_{R_j\sim r}\bigcup_{\tau}\Big\{ T\in \ZT_{j,\tau}: T \text{ good }, \| f_T \|_{L^p} \sim \lambda \Big\}.
\]
To simplify the argument, we do another pigeonholing: divide the unit ball into $r^{-1/2}$-cubes $q$ and sort them. This then reduces the integration domain of $|f_\lambda|^2$ in the above to $Y_{ M} = \bigcup_{q \in \mathcal{Q}_{ M}} q$ for some $M$, where
\[
\mathcal{Q}_M:=\{ r^{-1/2}\textrm{-cubes } q: q \textrm{ intersects } \sim M \textrm{ tubes } T \in \mathbb{W}_\lambda \}.
\]

Since $f_\lambda$ only involves good wave packets, by considering the quantity
$$
\sum_{q\in \mathcal{Q}_M} \sum_{T\in \mathbb{W}_\la: T\cap q\neq \emptyset} \mu_2(q),
$$
we get
\begin{equation}\label{eqn: count}
    M \mu_2 (\mathcal{N}_{r^{-1/2}}(Y_M) ) \lesssim  | \mathbb{W}_\lambda | r^{-\frac{d}{4} + c(\alpha) \delta},
\end{equation}
where $\mathcal{N}_{r^{-1/2}}(Y_M)$ is the $r^{-1/2}$-neighborhood of $Y_M$.

The rest of the proof of Lemma \ref{lem: decbound} will follow from Theorem \ref{thm: dec} and estimate (\ref{eqn: count}).

By H\"older's inequality and the observation that $f_\la$ has Fourier support in the $1$-neighborhood of the sphere of radius $r$, one has
\[
\int_{Y_M}|f_\lambda(x)|^2\,d\mu_2(x)\lesssim \left(\int_{Y_{M}} | f_\lambda|^p \right)^{2/p} \left(\int_{Y_{M}} |\mu_2 * \eta_{1/r}|^{p/(p-2)} \right)^{1-2/p},
\]where $\eta_{1/r}$ is a bump function with integral $1$ that is essentially supported on the ball of radius $1/r$.

To bound the second factor, we note that $\eta_{1/r} \sim r^d$ on the ball of radius $1/r$ and rapidly decaying off it. Using the fact that $\mu_2(B(x,r))\lesssim r^\alpha, \forall x\in \ZR^d, \forall r>0$, we have
$$
\|\mu_2*\eta_{1/r}\|_\infty \lesssim r^{d-\al}\,.
$$
Therefore,
\[
\begin{split}
\int_{Y_{M}} |\mu_2 * \eta_{1/r}|^{p/(p-2)} \lesssim & \|\mu_2 * \eta_{1/r}\|_{\infty}^{2/(p-2)} \int_{Y_M} d\mu_2*\eta_{1/r}\\
\lesssim  &r^{2(d-\alpha)/(p-2)}\mu_2(\mathcal{N}_{r^{-1/2}}(Y_M)).
\end{split}
\]By Theorem \ref{thm: dec}, the first factor can be bounded as follows:
\[
\begin{split}
\left(\int_{Y_{M}} | f_\lambda|^p \right)^{2/p}\lessapprox & \left(\frac{M}{\mathbb{W}_\lambda}\right) ^{1-2/p}\sum_{T\in\mathbb{W}_\lambda}\|f_T\|_{L^p}^2\\
\lesssim & \left(\frac{r^{-\frac{d}{4} + c(\alpha) \delta}}{\mu_2(\mathcal{N}_{r^{-1/2}}(Y_{M}))} \right)^{1-2/p} \sum_{T \in \mathbb{W}_\lambda} \| f_T \|_{L^p}^2,
\end{split}
\]where the second step follows from (\ref{eqn: count}).

Combining the two estimates together, one obtains
\[
\int_{Y_M}|f_\lambda(x)|^2\,d\mu_2(x) \lesssim r^{O_\alpha(\delta)+(\frac{5}{2p}-\frac{1}{4})d-\frac{2\alpha}{p}}\sum_{T \in \mathbb{W}_\lambda} \| f_T \|_{L^p}^2.
\]
Observe that $\|f_T\|_{L^p}$ has the following simple bound:
\[
\begin{split}
\|f_T\|_{L^p}\lesssim & \|f_T\|_{L^\infty}|T|^{1/p}\lesssim \sigma_r(\theta(T))^{1/2}|T|^{1/p} \|\widehat{M_T\mu_1}\|_{L^2(d\sigma_r)}\\
= & r^{-(\frac{1}{2p}+\frac{1}{4})(d-1)+O_\alpha(\delta)}\|\widehat{M_T\mu_1}\|_{L^2(d\sigma_r)}.
\end{split}
\]Plugging this back into the above formula, one obtains
\[
\begin{split}
\int_{Y_M}|f_\lambda(x)|^2\,d\mu_2(x) \lesssim & r^{O_\alpha(\delta)+(\frac{3}{2p}-\frac{3}{4})d-\frac{2\alpha}{p}+\frac{1}{p}+\frac{1}{2}}\sum_{T \in \mathbb{W}_\lambda}\|\widehat{M_T\mu_1}\|_{L^2(d\sigma_r)}^2\\
\lesssim & r^{-\frac{d}{2(d+1)}-\frac{(d-1)\alpha}{d+1}+\epsilon} r^{-(d-1)} \int | \hat \mu_1|^2 \psi_r \,d \xi,
\end{split}
\]where $p=2(d+1)/(d-1)$ and we have used orthogonality and chosen $\delta$ sufficiently small depending on $\alpha$, $\epsilon$. The proof of Lemma \ref{lem: decbound} and hence Proposition \ref{mainest2} is complete.

\section{Further comments}

\subsection{Generalization to other norms}\label{sec: norm}

Similarly as the two-dimensional case in \cite{GIOW}, Theorem \ref{main} and \ref{thm: pinned} still hold if $\Delta(E)$ and $\Delta_x(E)$ are replaced by
$$\Delta^K(E)=\left\{{||x-y||}_{K}: x,y \in E \right\}$$ and
$$\Delta^K_x(E)=\left\{{||x-y||}_K: y \in E \right\}$$ respectively, where $K$ is a symmetric convex body whose boundary $\partial K$ is $C^\infty$ smooth and has everywhere positive Gaussian curvature, and ${|| \cdot ||}_K$ is the distance induced by the norm determined by $K$.

The argument is identical to the one given in Section 7 of \cite{GIOW}, where the main additional ingredient is the celebrated stationary phase formula due to Herz \cite{Herz}, which says
$$ \widehat{\sigma}_K(\xi)=C\left(\frac{\xi}{|\xi|}\right){|\xi|}^{-\frac{d-1}{2}}\left(\cos \left(2\pi \left(\|\xi\|_{K^*}-\frac{d-1}{8}\right)\right)\right),$$ where $\sigma_K$ is the normalized surface measure on
$$S=\{x \in \mathbb{R}^d: \|x\|_K=1\}$$ and $\|\cdot\|_{K^{*}}$ is the dual norm defined by
$$ \|\xi\|_{K^*}=\sup_{x \in K} x \cdot \xi.$$ We omit the details.





\subsection{Why our method fails in odd dimensions}
In odd dimension $d$, in order to make use of the Orponen's radial projection theorem to control the bad part, we project $\al$-dimensional measure $\mu$ onto a $\frac{d+1}{2}$-dimensional plane $V$, since the condition $\al>\frac{d}{2}$ only guarantees that $\al>\frac{d+1}{2}-1$. To make the proof for bad part work through, we need to choose the mass threshold for bad tubes as follows: $T\in \ZT_{j,\tau}$ is \emph{bad} if
$$
\mu_2(4T)\gtrsim R_j^{-(d-1)/4+c(\al)\delta}\,.
$$
Then the numerology for good part gives us the following dimensional threshold for Falconer's distance set problem:
$$
\frac d 2+\frac 14+\frac{1}{4d}\,,
$$
which is not as good as the previously best known result $\frac d2+\frac 14+\frac{1}{8d-4}$ from \cite{DZ}.

In fact, when $d$ is odd, there exists counterexample that prevents one from removing a larger bad part from the measure. More precisely, consider a set $E\subset \mathbb{R}^d$ that is contained in some $\frac{d+1}{2}$ dimensional subspace of $\mathbb{R}^d$ with positive $\frac{d+1}{2}$ dimensional Lebesgue measure. For instance, let $E$ be the unit ball $B^{\frac{d+1}{2}}$. Then for every $T\in \ZT_{j,\tau}$,
\[
\mu_2(T)\sim R_j^{-\frac{\frac{d+1}{2}-1}{2}+\delta}=R_j^{-\frac{d-1}{4}+\delta}.
\]Hence it is impossible to further lower the bad threshold.

\vskip.125in

\section{Connections with the Erd\H os distance problem}

\vskip.125in

The following definition is due to the second listed author, Rudnev and Uriarte-Tuero (\cite{IRU14}).

\begin{definition} \label{sadaptable} Let $P$ be a set of $N$ points contained in ${[0,1]}^d$. Define the measure
\begin{equation} \label{pizdatayamera} d \mu^s_P(x)=N^{-1} \cdot N^{\frac{d}{s}} \cdot \sum_{p \in P} \chi_B(N^{\frac{1}{s}}(x-p))\, dx, \end{equation} where $\chi_B$ is the indicator function of the ball of radius $1$ centered at the origin. We say that $P$ is \emph{$s$-adaptable} if there exists $C$ independent of $N$ such that
\begin{equation} \label{sadaptenergy} I_s(\mu_P)=\int \int {|x-y|}^{-s} \,d\mu^s_P(x) \,d\mu^s_P(y) \leq C. \end{equation}
\end{definition}

It is not difficult to check that if the points in set $P$ are separated by distance $cN^{-1/s}$, then (\ref{sadaptenergy}) is equivalent to the condition
\begin{equation} \label{discreteenergy} \frac{1}{N^2} \sum_{p \not=p'} {|p-p'|}^{-s} \leq C, \end{equation}where the exact value of $C$ may be different from line to line. In dimension $d$, it is also easy to check that if the distance between any two points of $P$ is $\gtrsim N^{-1/d}$, then (\ref{discreteenergy}) holds for any $s \in [0,d)$, and hence $P$ is $s$-adaptable.

Let $K$ be a symmetric convex body as in Section \ref{sec: norm}. We will prove that if $d$ is even and $P$ is $s$-adaptable, for all $s\in (\frac{d}{2}+\frac{1}{4}, d)$, then for some $x \in P$,
\[| \Delta^K_{x}(P)| \gtrapprox N^{\frac{1}{\frac{d}{2}+\frac{1}{4}}}.
\]

Moreover, the proof below shows that we get this many distinct $N^{-\frac{1}{s_0}}$-separated distances, where $s_0=\frac{d}{2}+\frac{1}{4}$. The best currently known bounds for distance sets in higher dimensions with respect to the Euclidean metric are due to Solymosi and Vu \cite{SV08}. While their result applies to general point sets, their exponent is smaller than ours, and their method does not yield separated distances or apply to general metrics. For the best previously known bounds in higher dimensions for general metrics, see, for example, \cite{HI05} and \cite{IL05}.

Fix $s\in (\frac{d}{2}+\frac{1}{4},d)$ and define $d\mu^s_P$ as above. Note that the support of $d\mu^s_P$ is
$\mathcal{N}_{N^{-\frac{1}{s}}}(P)$, the $N^{-\frac{1}{s}}$-neighborhood of $P$. Since $I_s(\mu_P^s)$ is uniformly bounded, the proof of (the general norm case of) Theorem \ref{thm: pinned} implies that there exists $x_0 \in \mathcal{N}_{N^{-\frac{1}{s}}}(P)$ so that
$$ {\mathcal L}(\Delta^K_{ x_0}(\mathcal{N}_{N^{-\frac{1}{s}}}(P))) \ge c>0,$$

\noindent where the constant $c$ only depends on the value of $C$ in (\ref{discreteenergy}).

Let $x$ be a point of $P$ with $|x-x_0| \le N^{-1/s}$.  It follows that for any $y$, $\| x_0 - y \|_K = \| x-y\|_K + O(N^{-1/s})$.  Let $E_{N^{-1/s}} \left(\Delta^K_{x}(P) \right)$ be the smallest number of $N^{-1/s}$-intervals needed to cover $\Delta^K_{x}(P)$.  We know that $\Delta^K_{x_0} (\mathcal{N}_{N^{-\frac{1}{s}}}(P))$ is contained in the $O(N^{-1/s})$ neighborhood of $\Delta^K_{x}(P)$, and so

$$ {\mathcal L}(\Delta^K_{ x_0}(\mathcal{N}_{N^{-\frac{1}{s}}}(P))) \lesssim N^{-\frac{1}{s}} E_{N^{-1/s}} \left(\Delta^K_{x}(P) \right). $$

Then our lower bound on ${\mathcal L}(\Delta^K_{ x_0}(\mathcal{N}_{N^{-\frac{1}{s}}}(P)))$ gives

$$E_{N^{-1/s}} \left(\Delta^K_{x}(P) \right) \gtrsim N^{1/s}. $$

In other words, $\Delta^K_{x}(P)$ contains $\gtrsim N^{1/s}$ different distances that are pairwise separated by $\gtrsim N^{-1/s}$.  In particular, $|\Delta^K_{x}(P)| \gtrsim N^{1/s}$.  Since this holds for every $s > \frac{d}{2}+\frac{1}{4}$, we get $| \Delta^K_{x}(P)| \gtrapprox N^{\frac{1}{\frac{d}{2}+\frac{1}{4}}}$ as desired.

\end{document}